\newtheorem{lem}{Lemma}
\newtheorem{thm}{Theorem}
\renewcommand{\thethm}{\Alph{thm}}
\begin{document}
\title{A degree sum condition for the existence of a quasi 5-contractible edge in a quasi 5-connected graph}

\author{Shuai Kou$^a$, \quad Chengfu Qin$^b$, \quad Weihua Yang$^a$\footnote{Corresponding author. E-mail: yangweihua@tyut.edu.cn, ywh222@163.com},\\
\quad Mingzu Zhang$^c$, \quad Shuang Zhao$^a$\\
\small $^a$ Department of Mathematics, Taiyuan University of Technology, Taiyuan 030024, China\\
\small $^b$ Department of Mathematics, Nanning Normal University, Nanning 530001, China\\
\small $^c$ College of Mathematics and System Sciences, Xinjiang University, Urumqi 830046, China
}
\date{}

\maketitle {\flushleft\bf Abstract:} {\small An edge of a quasi $k$-connected graph is said to be quasi $k$-contractible if the contraction of the edge results in a quasi $k$-connected graph. We show that every 5-connected graph contains a quasi 5-contractible edge. Furthermore, we prove that a quasi 5-connected graph possesses a quasi 5-contractible edge, if the degree sum of any two vertices with distance at most two is at least 9. This result strengthens a theorem proved by Kriesell when $k=4$ (M. Kriesell, A degree sum condition for the existence of a contractible edge in a $k$-connected graph, J. Combin. Theory Ser. B 82(2001)81-101).}

{\flushleft\bf Keywords}: Quasi 5-connected graph; Degree sum; Distance

\section{Introduction}
In this paper, all graphs considered are finite, simple and undirected graphs, with undefined terms and notations following \cite{Bondy}. For a graph $G$, let $V(G)$ and $E(G)$ denote the set of vertices and the set of edges of $G$, respectively. For $S\subseteq V(G)$, let $G-S$ denote the graph obtained from $G$ by deleting the vertices of $S$ together with the edges incident with them. For $x\in V(G)$, $N_{G}(x)$ denotes the set of neighbors of $x$ in $G$. By $d_{G}(x)=|N_{G}(x)|$ we denote the degree of a vertex $x\in V(G)$. Let $\delta(G)$ denote the minimum degree of $G$. The \emph{distance} between two vertices $x$ and $y$ in $G$ is the length of a shortest $x, y$-path in $G$.
An edge $e=xy$ of $G$ is said to be \emph{contracted} if it is deleted and its ends are identified. The resulting graph is denoted by $G/e$, and the new vertex in $G/e$ is denoted by $\overline{xy}$. Note that, in the contraction, each resulting pair of double edges is replaced by a single edge. Let $k\geq 2$ be an integer and let $G$ be a non-complete $k$-connected graph. An edge of $G$ is said to be \emph{k-contractible} if its contraction results in a $k$-connected graph. A $k$-connected graph without a $k$-contractible edge is said to be a \emph{contraction critical k-connected graph}.

A \emph{cut} of a connected graph $G$ is a subset $V^{\prime}(G)$ of $V(G)$ such that $G-V^{\prime}(G)$ is disconnected. A \emph{k-cut} is a cut of $k$ elements. Suppose that $T$ is a $k$-cut of $G$. We say that $T$ is a \emph{nontrivial k-cut}, if the components of $G-T$ can be partitioned into subgraphs $G_{1}$ and $G_{2}$ such that $|V(G_{1})|\geq 2$ and $|V(G_{2})|\geq 2$. A ($k-1$)-connected graph is \emph{quasi k-connected} if it has no nontrivial ($k-1$)-cuts. Clearly, every $k$-connected graph is quasi $k$-connected. Let $G$ be a quasi $k$-connected graph. An edge $e$ of $G$ is said to be \emph{quasi k-contractible} if $G/e$ is still quasi $k$-connected. If $G$ does not have a quasi $k$-contractible edge, then $G$ is said to be a \emph{contraction critical quasi k-connected graph}.

Tutte's \cite{Tutte1961} famous wheel theorem implies that every 3-connected graph on more than four vertices contains an edge whose contraction yields a new 3-connected graph. Thomassen \cite{Thomassen} stated that for $k\geq4$, there are infinitely many contraction critical $k$-connected $k$-regular graphs. Egawa \cite{Egawa} proved the following theorem, which is a sufficient condition for the existence of a $k$-contractible edge in a $k$-connected graph.

\begin{thm}\cite{Egawa}\label{thmA}
If $G$ is a non-complete $k$-connected graph with $\delta(G)\geq\lfloor\frac{5k}{4}\rfloor$, then $G$ contains a $k$-contractible edge.
\end{thm}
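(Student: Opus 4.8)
The plan is to argue by contradiction: assume $G$ is a non-complete $k$-connected graph with $\delta(G)\geq\lfloor 5k/4\rfloor$ that has no $k$-contractible edge, i.e.\ $G$ is contraction critical. For $k\leq 3$ the hypothesis only asserts $\delta(G)\geq k$ and the conclusion is classical (Tutte's wheel theorem for $k=3$, and easy for $k\leq 2$), so I would assume $k\geq 4$; then $\lfloor 5k/4\rfloor\geq k+1$, so no vertex of $G$ has degree exactly $k$. The one general fact I would use about contraction criticality is: since $G/e$ has a cut of size at most $k-1$ for every edge $e=xy$, and $G$ is $k$-connected, this cut must contain $\overline{xy}$, so every edge of $G$ lies in a $k$-cut together with both of its ends.

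Next I would bring in the standard fragment machinery. Call $\emptyset\neq F\subsetneq V(G)$ a \emph{fragment} if $N_G(F)$ is a minimum cut (so $|N_G(F)|=k$) and $\overline F:=V(G)\setminus(F\cup N_G(F))\neq\emptyset$; then $\overline F$ is again a fragment with $N_G(\overline F)=N_G(F)$, and an \emph{atom} is a fragment of smallest cardinality (atoms exist by the previous paragraph). The crucial tool is submodularity of $F\mapsto|N_G(F)|$: for fragments $F_1,F_2$ with $F_1\cap F_2\neq\emptyset$ and $\overline{F_1}\cap\overline{F_2}\neq\emptyset$, both $F_1\cap F_2$ and $\overline{F_1}\cap\overline{F_2}$ are fragments, with an analogous statement for the diagonal intersections $F_1\cap\overline{F_2}$, $\overline{F_1}\cap F_2$. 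Together with minimality of an atom $A$ (write $T:=N_G(A)$, $B:=\overline A\neq\emptyset$) this yields the familiar consequences, e.g.\ for any fragment $F$, if $A\cap F\neq\emptyset$ and $A\not\subseteq F$ then $B\subseteq F\cup N_G(F)$, together with the symmetric statements under $F\leftrightarrow\overline F$ and $A\leftrightarrow B$.

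The core of the argument: fix an atom $A$, put $T=N_G(A)$, $B=\overline A\neq\emptyset$, and fix $a\in A$. From $N_G(a)\subseteq(A\setminus\{a\})\cup T$ I get the lower bound $|A|\geq d_G(a)-k+1\geq\lfloor k/4\rfloor+1\geq 2$; the goal is to contradict it. Every edge at $a$ is non-contractible, so for each $x\in N_G(a)$ there is a $k$-cut $T_x\ni a,x$ with fragments $F_x,\overline{F_x}$. Because $a\in A\cap T_x$, the consequences above pin the position of $A$ relative to $T_x$ down to a few possibilities (roughly: $A\setminus T_x$ sits on one side of $T_x$, and $B$ is then largely confined by $T_x$); I would run the same analysis for pairs $T_x,T_{x'}$. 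Classifying $N_G(a)$ by whether a neighbour lies in $T$ or in $A$ and by how its cut $T_x$ meets $A$, and using $d_G(a)\geq\lfloor 5k/4\rfloor$ to force (pigeonhole) enough of the $T_x$ to coincide on a large part of $A$, I would produce either a fragment strictly smaller than $A$ or a cut of $G$ of size below $k$, a contradiction in either case. The threshold $\lfloor 5k/4\rfloor$ is exactly what the pigeonhole step needs, matching Thomassen's contraction critical $k$-connected $k$-regular examples, which show the hypothesis cannot be relaxed to $\delta(G)\geq k$.

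I expect the main obstacle to be precisely this last step — organising the case analysis of how the many cuts $T_x$ ($x\in N_G(a)$) lie relative to $A$ and to one another, and then bookkeeping the neighbour count of $a$ tightly enough that $\lfloor 5k/4\rfloor$ is just sufficient. A secondary difficulty is the degenerate regime of small atoms (say $|A|=2$, or atoms with $A\subseteq T_x$ for some of the cuts), where the generic count degenerates and one must argue directly from $k$-connectivity, and one must also watch the parity of $k$ inside the floor.
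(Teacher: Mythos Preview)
The paper does not prove this theorem. Theorem~\ref{thmA} is stated as a cited result of Egawa~\cite{Egawa} and is used only as historical background motivating Theorems~\ref{thmB} and~\ref{thmC}; no proof or even proof sketch of it appears anywhere in the paper. So there is no ``paper's own proof'' to compare your proposal against.

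As to your outline on its own merits: the general architecture (assume contraction criticality, take an atom $A$, and analyse how the $k$-cuts through edges at a vertex $a\in A$ interact with $A$ via submodularity) is indeed the standard route to Egawa's theorem. However, the part you flag as the main obstacle is essentially the entire content of the proof, and your description of it (``pigeonhole on the $T_x$ to force many of them to coincide on a large part of $A$'') is not an accurate summary of how Egawa's argument actually runs; the real proof proceeds through a careful structural analysis of how an atom meets a second fragment and a delicate counting of edges between $A$ and $T$, rather than a single pigeonhole step on the cuts $T_x$. So what you have is a plausible opening, but not yet a proof plan one could execute; if you want to reconstruct the argument you should consult~\cite{Egawa} directly.
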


Later, Kriesell \cite{Kriesell} proved the following stronger statement.

\begin{thm}\cite{Kriesell}\label{thmB}
Let $G$ be a non-complete $k$-connected graph, where $k\neq7$. If $d_{G}(x)+d_{G}(y)\geq 2\lfloor\frac{5k}{4}\rfloor-1$ hold for every pair $x$, $y$ of vertices of $G$ at distance one or two,
then $G$ contains a $k$-contractible edge.
\end{thm}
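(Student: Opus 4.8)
The plan is to argue by contradiction. Suppose $G$ is a non-complete $k$-connected graph with $k\neq 7$ satisfying the degree-sum hypothesis but having no $k$-contractible edge. The cases $k\le 3$ are settled by Tutte's wheel theorem together with the elementary fact that non-complete $k$-connected graphs with $k\le 2$ always contain a $k$-contractible edge, so assume $k\ge 4$. The opening observation is standard: since no edge of $G$ is $k$-contractible, for each edge $e=xy$ the graph $G/e$ has a cut of size at most $k-1$; because $G$ is $k$-connected this cut must contain $\overline{xy}$, and pulling it back yields a $k$-cut $T$ of $G$ with $\{x,y\}\subseteq T$. Hence every edge of $G$ lies in a $k$-cut through both of its ends. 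For a $k$-cut $T$, every component $C$ of $G-T$ satisfies $N_G(C)=T$ (because $|N_G(C)|\le|T|=k$ and $G$ is $k$-connected); call any union $F$ of some but not all of these components a \emph{fragment}, with complementary fragment $\overline F=V(G)\setminus(T\cup F)$, so that no edge joins $F$ to $\overline F$; a fragment may always be taken to be a single component of $G-T$, hence connected.

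First I would install the classical uncrossing machinery. If $F,F'$ are fragments with $N_G(F)=T$, $N_G(F')=T'$, and $F\cap F'\neq\emptyset\neq\overline F\cap\overline{F'}$, then a short submodular count — using that no edge joins $F$ to $\overline F$ nor $F'$ to $\overline{F'}$, together with $|N_G(S)|\ge k$ whenever $S$ is nonempty and $S\cup N_G(S)\neq V(G)$ — forces $|N_G(F\cap F')|=|N_G(\overline F\cap\overline{F'})|=k$, so $F\cap F'$ and $\overline F\cap\overline{F'}$ are again fragments; the symmetric statement holds for $F\cap\overline{F'}$ and $\overline F\cap F'$. With this available, fix a fragment $A$ of minimum cardinality, set $T=N_G(A)$, $\overline A=V(G)\setminus(T\cup A)$, and read off structure: every vertex of $A$ has all its neighbours inside $A\cup T$, so $d_G(v)\le |A|+k-1$ for $v\in A$; every vertex of $T$ has a neighbour in $A$ and one in $\overline A$; and, uncrossing $A$ against the $k$-cut supplied by an edge incident to a vertex of $A$ and invoking minimality, one controls $|A|$ and the bipartite graph between $A$ and $T$ tightly. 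This is the fragment analysis underlying Egawa's Theorem~\ref{thmA}, reorganized so as to consult only local degree information.

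A degree count then yields the contradiction. The heart of it is to show that the minimum fragment can be chosen with $|A|$ so small that $|A|+k-1\le\lfloor\frac{5k}{4}\rfloor-1$; then any two vertices $v,w\in A$ are at distance at most two in $G$ (through $A$ itself or through $T$) and satisfy $d_G(v)+d_G(w)\le 2\lfloor\frac{5k}{4}\rfloor-2$, contradicting the hypothesis. The configurations in which this direct count fails — essentially a minimum fragment of size one, where $A=\{v\}$ forces $d_G(v)=|T|=k$, and certain ``large'' fragments — are handled by finer fragment arguments together with the well-known local structure of contraction critical $k$-connected graphs around a vertex of degree $k$ (a near-complete neighbourhood configuration of Mader type), which again produces two vertices at distance at most two whose degree sum lies below $2\lfloor\frac{5k}{4}\rfloor-1$.

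The main obstacle, and exactly where the restriction $k\neq 7$ and the use of the bound $2\lfloor\frac{5k}{4}\rfloor-1$ in place of the seemingly natural $2\lfloor\frac{5k}{4}\rfloor$ become essential, is this endgame. Because the hypothesis constrains only a \emph{sum} over distance-$\le 2$ pairs rather than every individual degree (as Egawa's minimum-degree version does), one must cope with a minimum fragment $A$ that is degree-irregular — a few high-degree vertices propped up by many low-degree ones — and still exhibit two genuinely low-degree vertices at distance at most two. The bookkeeping splits according to $k\bmod 4$, since $\lfloor\frac{5k}{4}\rfloor$ grows unevenly, and the single residue-and-size combination that survives the count corresponds precisely to a contraction critical $7$-connected configuration; isolating and excluding that lone case is the delicate part of the argument.
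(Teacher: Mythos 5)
The paper does not prove Theorem~\ref{thmB} at all --- it is quoted from Kriesell's paper as background --- so there is no in-paper argument to compare yours against; your proposal has to stand on its own as a proof, and as written it does not. It is a plausible road map of the known strategy (fragments, uncrossing, a smallest fragment $A$, the bound $d_G(v)\le |A|+k-1$ for $v\in A$), but every quantitatively hard step is asserted rather than carried out. The pivotal claim, that a smallest fragment can be taken with $|A|\le\lfloor\frac{5k}{4}\rfloor-k=\lfloor\frac{k}{4}\rfloor$, is essentially the whole content of Egawa's theorem and is not a ``short submodular count''; you cite it as something one ``controls tightly'' without proof. Worse, for $4\le k\le 7$ one has $\lfloor\frac{k}{4}\rfloor=1$, so in exactly the range most relevant to this paper (the $k=4$ case underlying Theorem~\ref{thm2}) your ``heart of the argument'' --- two vertices inside a fragment of size at least $2$ with small degree sum --- never fires, and the entire theorem reduces to what you label the exceptional case of a trivial minimum fragment. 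That case, which requires Mader's detailed local structure around degree-$k$ vertices in contraction-critical graphs and is where Kriesell's actual proof spends most of its length, is dispatched in your write-up by a single sentence about ``a near-complete neighbourhood configuration of Mader type.''

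Two further specific gaps: first, the assertion that any two vertices of $A$ are at distance at most two needs the observation that each has at least $k-|A|+1$ neighbours in $T=N_G(A)$, so that $|A|\le\lfloor\frac{k}{4}\rfloor$ forces a common neighbour in $T$; you state the conclusion but not the reason, and it genuinely depends on the size bound you have not established. Second, the role of $k\ne 7$ is the one place where the theorem's hypothesis is sharp in a structural way, and saying that ``the single residue-and-size combination that survives the count corresponds precisely to a contraction critical $7$-connected configuration'' identifies neither the configuration nor the inequality that fails; without that, a reader cannot check that every other case is actually closed. In short, the skeleton matches the literature, but the proof is not there: the Egawa-type fragment bound, the trivial-atom analysis, and the $k=7$ exclusion all need to be supplied before this is an argument rather than an outline.
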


Moreover, he conjectured that the degree sum condition can be relaxed for only adjacent pairs. Su and Yuan \cite{Su} verified that the conjecture holds for $k\geq8$.

\begin{thm}\cite{Su}\label{thmC}
Let $G$ be a non-complete $k$-connected graph, where $k\geq8$. If $d_{G}(x)+d_{G}(y)\geq 2\lfloor\frac{5k}{4}\rfloor-1$ hold for every pair $x$, $y$ of adjacent vertices of $G$, then $G$ contains a $k$-contractible edge.
\end{thm}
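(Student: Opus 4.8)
The plan is to prove the contrapositive: assume $G$ is a non-complete $k$-connected graph with $k\geq 8$ that has \emph{no} $k$-contractible edge (a \emph{contraction critical} $k$-connected graph), and produce two adjacent vertices $x,y$ with $d_G(x)+d_G(y)\leq 2\lfloor\tfrac{5k}{4}\rfloor-2$. The first ingredient is the elementary fact that an edge $uv$ of a $k$-connected graph is non-$k$-contractible exactly when some $k$-cut $T$ of $G$ contains both $u$ and $v$; thus in $G$ every edge is ``trapped'' in a $k$-cut. For a $k$-cut $T$, call a non-empty union $F$ of components of $G-T$ with $V(G)\setminus(F\cup T)\neq\emptyset$ a \emph{fragment}; then $N_G(F)=T$, write $\overline F=V(G)\setminus(F\cup T)$, and note that since $|T|=k$ every component of $G-T$ has neighbourhood exactly $T$. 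The recurring tool is the standard crossing lemma for fragments: from submodularity of the neighbourhood function, if $F_1,F_2$ are fragments with $F_1\cap F_2\neq\emptyset$ and $\overline{F_1}\cap\overline{F_2}\neq\emptyset$, then $|N_G(F_1\cap F_2)|+|N_G(F_1\cup F_2)|\leq 2k$, so both $F_1\cap F_2$ and $F_1\cup F_2$ are again fragments; the same holds along the other diagonal of corners.

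Next I would fix an \emph{atom} $A$, i.e.\ a fragment of minimum order $a:=|A|$, and set $T:=N_G(A)$. Minimality together with the remark above forces $G[A]$ to be connected, and since no vertex of $A$ has a neighbour in $\overline A$, every $v\in A$ satisfies $N_G(v)\subseteq A\cup T$ and hence $d_G(v)\leq a+k-1$. The quantitative heart of the argument is the bound $a\leq\lfloor\tfrac{k}{4}\rfloor$: were $a$ larger, take an edge of $G[A]$, trapped in a $k$-cut $T_e$; both fragments of $T_e$ meet $A$ without containing it, and feeding $A$ and one of them into the crossing lemma produces a fragment of order strictly below $a$, a contradiction. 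Making this rigorous (and controlling the borderline case, where the value of $\lfloor\tfrac{5k}{4}\rfloor$ relative to small numbers and the hypothesis $k\geq 8$ come in) is in effect a sharpening of Egawa's Theorem~\ref{thmA}. Granting $a\leq\lfloor\tfrac{k}{4}\rfloor$, the subcase $a\geq 2$ is then immediate: picking any edge $w_1w_2$ of the connected graph $G[A]$,
\[
d_G(w_1)+d_G(w_2)\ \leq\ 2(a+k-1)\ \leq\ 2\Big\lfloor\tfrac{k}{4}\Big\rfloor+2k-2\ =\ 2\Big\lfloor\tfrac{5k}{4}\Big\rfloor-2 .
\]

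The remaining subcase $a=1$ — say $A=\{w\}$, so $d_G(w)=k$ — is the genuinely difficult one, and is precisely where the stronger adjacency conclusion must be earned, compared with Kriesell's distance-two Theorem~\ref{thmB}. Now I need a \emph{neighbour} $b'$ of $w$ with $d_G(b')\leq 2\lfloor\tfrac{5k}{4}\rfloor-k-2=\bigl(2\lfloor\tfrac{k}{4}\rfloor-1\bigr)+(k-1)$, whereas Kriesell only needed $b'$ within distance two of $w$. My plan is: for each neighbour $b$ of $w$, the edge $wb$ is trapped in a $k$-cut $T_b\ni w,b$, and since $|T_b\setminus\{w\}|=k-1<d_G(w)$, at least one neighbour of $w$ lies in a fragment of $T_b$; so fragments $Y$ with $Y\cap N_G(w)\neq\emptyset$ exist. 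I would then show that the minimum order of such a fragment is at most $2\lfloor\tfrac{k}{4}\rfloor-1$. For a minimum such $Y$ and any $b'\in Y\cap N_G(w)$, the inclusion $N_G(b')\subseteq Y\cup N_G(Y)$ gives $d_G(b')\leq|Y|+k-1\leq 2\lfloor\tfrac{k}{4}\rfloor-1+(k-1)$, so $w,b'$ is the required adjacent pair.

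The main obstacle is exactly that last claim — that some fragment meeting $N_G(w)$ has order at most $2\lfloor\tfrac{k}{4}\rfloor-1$ — and it is where $k\geq 8$ becomes indispensable: when $k\in\{4,5,6,7\}$ one has $2\lfloor\tfrac{k}{4}\rfloor-1=1$, which would force $|Y|\leq 1$, all the slack in the inequalities above disappears, and small contraction critical configurations obstruct the conclusion (this is why Kriesell's conjecture is still open in that range). I expect the proof of the claim to need a careful taxonomy of the fragments incident with $w$ — whether two minimum fragments meeting $N_G(w)$ nest, are disjoint, or cross — ruling out each ``large'' alternative by repeated application of the crossing lemma together with the minimality of the global atom $\{w\}$, and very possibly finishing by inspecting a short list of extremal configurations.
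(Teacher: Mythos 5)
This statement is Theorem~\ref{thmC}, which the paper does not prove at all: it is quoted from Su and Yuan \cite{Su}, so there is no in-paper argument to compare yours against. Judged on its own, your proposal is a sensible reconstruction of the known strategy (pass to a minimum fragment, split on whether the atom is a singleton), and the easy half is fine: the arithmetic $2(a+k-1)\leq 2\lfloor k/4\rfloor+2k-2=2\lfloor 5k/4\rfloor-2$ for $a\geq 2$, and the reduction of the $a=1$ case to finding a fragment $Y$ with $Y\cap N_G(w)\neq\emptyset$ and $|Y|\leq 2\lfloor k/4\rfloor-1$, both check out.

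But there are two genuine gaps, one of which is the entire theorem. First, the bound $a\leq\lfloor k/4\rfloor$ does not follow from the statement of Theorem~\ref{thmA} as a black box: the contrapositive of Theorem~\ref{thmA} only yields a single vertex of degree at most $\lfloor 5k/4\rfloor-1$, and the inequality $\delta(G)\leq a+k-1$ runs in the wrong direction to bound $a$ from above. What you need is the internal content of Egawa's proof (that a contraction critical $k$-connected graph has a fragment of order at most $k/4$), and your one-sentence crossing-lemma sketch does not reproduce it --- the edge of $G[A]$ sits in a $k$-cut $T_e$ whose fragments need not cross $A$ in the favourable way without further case analysis. This could at least be repaired by citing Egawa's result in its fragment form. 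Second, and fatally, the claim that when $A=\{w\}$ some fragment meeting $N_G(w)$ has order at most $2\lfloor k/4\rfloor-1$ is exactly the hard theorem of Su and Yuan; you explicitly defer it (``I expect the proof of the claim to need a careful taxonomy\ldots''), and nothing in the proposal supplies it. Since every step that distinguishes this adjacency version from Kriesell's distance-two version is concentrated in that claim, the proposal as written is an outline with the decisive lemma left unproved, not a proof.
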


We focus on quasi 5-connected graphs and obtain the following results. Theorem \ref{thm2} strengthens Theorem \ref{thmB} when $k=4$, showing that if $G$ satisfies the degree sum condition, then not only does $G$ remain 4-connectivity after the contraction of an edge, but it also retains quasi 5-connectivity.

\setcounter{thm}{0}
\renewcommand{\thethm}{\arabic{thm}}
\begin{thm}\label{thm1}
Let $G$ be a 5-connected graph. Then $G$ contains a quasi 5-contractible edge.
\end{thm}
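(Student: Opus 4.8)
The plan is to argue by contradiction: suppose $G$ is a $5$-connected graph with no quasi $5$-contractible edge. The first step is a reformulation of this hypothesis. For every edge $e$ the graph $G/e$ is $4$-connected (as $G$ is $5$-connected), and $G$ itself has no $4$-cut; hence if $G/e$ is not quasi $5$-connected it must have a nontrivial $4$-cut, and any such cut necessarily contains the vertex $\overline{xy}$. Lifting it, one sees that $e=xy$ fails to be quasi $5$-contractible precisely when there is a $5$-cut $T$ of $G$ with $x,y\in T$ such that the components of $G-T$ can be partitioned into two parts each of order at least $2$. So the contrary hypothesis reads: for every edge $xy$ of $G$ there is a partition $V(G)=A\cup T\cup B$ with $|T|=5$, $x,y\in T$, no edge between $A$ and $B$, and $|A|,|B|\ge 2$; and since every $5$-cut of a $5$-connected graph is a minimum cut, automatically $N_{G}(A)=N_{G}(B)=T$. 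I call such a triple a \emph{good separation for $xy$}.

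Next I would dispose of the easy cases. If $G$ is complete then $G=K_{n}$ with $n\ge 6$ and $G/e=K_{n-1}$ is quasi $5$-connected, a contradiction; so $G$ is non-complete. If $\delta(G)\ge 6=\lfloor\frac{5\cdot 5}{4}\rfloor$, then Theorem~\ref{thmA} with $k=5$ produces a $5$-contractible edge $e$, so that $G/e$ is $5$-connected and therefore quasi $5$-connected, again a contradiction. Hence it remains to treat the case $\delta(G)=5$; fix a vertex $v$ with $d_{G}(v)=5$.

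Now choose, among all good separations for all edges of $G$, one $A\cup T\cup B$ with $|A|$ minimum, and take $|A|\le|B|$; then every good separation has both sides of order at least $|A|$. Combining this minimality with the fact that $5$-cuts are minimum cuts, I would read off the basic structure: each component of $G-T$ has neighbourhood exactly $T$; $G[A]$ is connected unless $|A|=2$; and every vertex of $A$ has all of its neighbours in $A\cup T$, hence degree at most $|A|+4$, which is close to $5$ when $|A|$ is small. The core of the proof is a crossing analysis. Given an edge $e'$ with an end $a\in A$, pick a good separation $A'\cup T'\cup B'$ for $e'$; both ends of $e'$ lie in $T'$, so $a\in A\cap T'$. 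Applying the submodular inequality $|N_{G}(X\cap Y)|+|N_{G}(X\cup Y)|\le|N_{G}(X)|+|N_{G}(Y)|$ to the four corners $A\cap A'$, $A\cap B'$, $B\cap A'$, $B\cap B'$ (after verifying the usual non-degeneracy conditions), and using that a nonempty corner with a $5$-element neighbourhood is again a fragment whose order must be at least $|A|$, the inequalities collapse to equalities and most corners are forced to be empty. Iterating this over the edges incident with $A$ pins $A$ and $T$ inside a small, rigid part of $G$.

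The main obstacle---and what makes this more delicate than the classical arguments for $k$-contractible edges---is the bookkeeping imposed by the requirement that the separating $5$-set must contain the two ends of an edge: the ``for every edge'' quantifier has to be exploited edge-by-edge while the minimal fragment $A$ is held fixed, the claim that a small corner of order at least $2$ is the side of a \emph{good} separation needs the relevant $5$-cut to contain an edge (so it must be verified, not merely asserted), and the degenerate configurations require separate treatment---namely $5$-cuts with a singleton component (a degree-$5$ vertex whose neighbourhood is exactly that cut) and the small fragments with $|A|=2$, the latter being where the abundance of degree-$5$ vertices in the case $\delta(G)=5$ is most troublesome. Once the local structure around $v$ and around a minimal fragment has been constrained tightly enough, I expect the contradiction to emerge either by exhibiting an explicit edge $xy$---for instance one joining $v$ to a neighbour, or an edge lying inside $T$---for which no $5$-set containing $\{x,y\}$ can split $G$ into two parts of order at least $2$, or from a short counting argument showing that the forced local configuration cannot occur in a $5$-connected graph. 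Either way this contradicts the standing hypothesis, which proves the theorem.
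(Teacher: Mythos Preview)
Your framework is the right one and is essentially the paper's: assume $G$ is $5$-connected with no quasi $5$-contractible edge, reformulate this as ``every edge $xy$ lies in a $5$-cut $T$ separating $G$ into two parts of order $\ge 2$'', choose such a separation with one side $A$ of minimum order, and analyse how a second such separation (for an edge with an end in $A$) crosses the first. Your observation that the case $\delta(G)\ge 6$ follows immediately from Egawa's theorem is correct and pleasant, though the paper does not split by $\delta$ at all; and your fixed degree-$5$ vertex $v$ never reappears in your argument, so it does no work.

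The gap is that after the set-up you stop exactly where the proof begins. Abstract submodularity will tell you that a nonempty corner has neighbourhood of size $\ge 5$, but, as you yourself note, to invoke the minimality of $|A|$ you need that this neighbourhood \emph{contains an edge}, i.e.\ that the corner arises as the side of a good separation for some specific edge. This is not automatic, and the paper does not obtain it by a single clean observation: it runs a careful case analysis, first proving $|V(G)|\ge 10$, then $|A|\le 3$, then ruling out $|A|=2$ by an intricate argument involving three or four successive good separations (for edges $xv_4$, $yv_5$, $xv_1$, $yv_1$) and a preparatory lemma (Lemma~\ref{nontrivial}) that forces $|A|=2$ whenever some $x\in N_G(A)$ has a unique neighbour in $A$. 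Only then, with $|A|=3$, is the crossing picture rigid enough to produce a contradiction. Your closing paragraph candidly says you ``expect the contradiction to emerge'' from one of two sources, without identifying which edge defeats the hypothesis or carrying out any count; that is precisely the content of the theorem, and it is not short. In summary: right strategy, but what you have written is an outline of the paper's proof rather than a proof, and the missing part is all of the work.
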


\begin{thm}\label{thm2}
Let $G$ be a quasi 5-connected graph. If $d_{G}(x)+d_{G}(y)\geq9$ hold for every pair $x$, $y$ of vertices of $G$ at distance one or two, then $G$ contains a quasi 5-contractible edge.
\end{thm}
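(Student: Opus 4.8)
The plan is to argue by contradiction: assume $G$ is quasi $5$-connected with $d_G(x)+d_G(y)\ge 9$ whenever the distance between $x$ and $y$ is at most $2$, yet $G$ has no quasi $5$-contractible edge. If $G$ is complete the degree hypothesis forces $|V(G)|\ge 6$, and contracting an edge of $K_n$ ($n\ge 6$) gives the quasi $5$-connected graph $K_{n-1}$; so $G$ is non-complete. If $G$ is $5$-connected, Theorem~\ref{thm1} produces a quasi $5$-contractible edge, a contradiction; hence $\kappa(G)=4$, and $G$ has a $4$-cut. Since $G$ has no nontrivial $4$-cut this cut is trivial, so (apart from the finitely many graphs with $|V(G)|\le 7$, which are settled directly) $G$ has a vertex $v$ of degree $4$ whose neighbourhood $N_G(v)=\{a_1,a_2,a_3,a_4\}$ is a $4$-cut isolating $v$. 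Any two of the $a_i$ are at distance at most $2$ via $v$, and each is adjacent to $v$, so $d_G(a_i)\ge 5$ for all $i$ and $d_G(a_i)+d_G(a_j)\ge 9$ for all $i\ne j$. (Note the bound $9$ is exactly Kriesell's threshold $2\lfloor 5k/4\rfloor-1$ for $k=4$, so Theorem~\ref{thmB} already yields a $4$-contractible edge of $G$; the extra content of Theorem~\ref{thm2} is that quasi $5$-connectivity can be preserved, which is what the vertex $v$ lets us control.)

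Next I would show that each edge $e_i:=va_i$, which is not quasi $5$-contractible, forces a ``balanced'' $5$-cut through $v$ and $a_i$. Since $G$ is $4$-connected, $G/e_i$ is $3$-connected; if it were not $4$-connected it would have a $3$-cut, which must contain $\overline{va_i}$ (else $G$ itself would have a $3$-cut), giving a $4$-cut $T\ni v,a_i$ of $G$. Being trivial, $T$ isolates a vertex $w$ with $N_G(w)=T$, so $v\in N_G(w)$ puts $w$ in $N_G(v)\setminus\{a_i\}$, and then $\{v,a_i\}\subseteq N_G(w)$ gives $d_G(w)=4$, contradicting $d_G(a_j)\ge 5$. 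So $G/e_i$ is $4$-connected but not quasi $5$-connected, hence has a nontrivial $4$-cut $S_i$; as $G$ has no nontrivial $4$-cut, $\overline{va_i}\in S_i$. Then $T_i:=(S_i\setminus\{\overline{va_i}\})\cup\{v,a_i\}$ is a $5$-cut of $G$ with $v,a_i\in T_i$ for which $G-T_i$ splits into nonempty unions of components $A_i,B_i$ with $|A_i|,|B_i|\ge 2$; moreover $\{v,a_i\}$ has a neighbour in each of $A_i,B_i$, since $S_i\setminus\{\overline{va_i}\}$ is too small to be a cut of the $4$-connected graph $G/e_i$.

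I would then fix an extremal configuration: over all $i$, all choices of $S_i$, and both sides, take one — say for $i=1$ — minimizing $|F|$, where $F$ is the chosen side, and set $C:=N_G(F)\subseteq T_1$, so $|C|\in\{4,5\}$. The degree hypothesis is first used to bound $|F|$: every vertex of $F$ has all of its neighbours in $F\cup C$, so low-degree vertices of $F$ have few neighbours, and weighing pairs of such vertices (which lie at distance at most $2$) against $d_G(x)+d_G(y)\ge 9$ restricts $|F|$ to a small value and pins down $C$. Next I would uncross $F$ with the sides $A_j,B_j$ of the other cuts $T_j$ by submodularity of $X\mapsto|N_G(X)|$: each relevant corner ($F\cap A_j$, $F\cap B_j$, $F\setminus T_j$, and so on) is either a side of a balanced $5$-cut of $G$ with strictly fewer than $|F|$ vertices, contradicting minimality, or has $N_G$-boundary of size at most $4$ and hence — being a trivial $4$-cut — forces additional degree-$4$ vertices or triangles near $v$ incompatible with $d_G(a_i)\ge 5$ or $d_G(a_i)+d_G(a_j)\ge 9$. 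Pushing this dichotomy through the few possible positions of $a_2,a_3,a_4$ relative to $T_1$, $A_1$, $B_1$ should eliminate every case and complete the contradiction.

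The main obstacle is this last step. Unlike ordinary $k$-contractibility, quasi $5$-contractibility is obstructed by two incomparable objects — $4$-cuts and balanced $5$-cuts — and ``balanced'' is not a submodular property, since the contracted vertex and $v$ itself sit inside the cuts; so each uncrossing fans out into sub-cases according to how $v,a_1,\dots,a_4$ distribute among the corners. In the borderline sub-cases progress cannot come from shrinking $F$ but has to be extracted from the rigidity imposed by the \emph{absence} of nontrivial $4$-cuts, which propagates degree-$4$ vertices and short cycles through the neighbourhood of $v$. Getting the bookkeeping right so that every branch terminates — either by a genuine decrease of $|F|$ or by the appearance of a forbidden subconfiguration — is where the bulk of the argument lies.
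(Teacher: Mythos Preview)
Your outline tracks the paper's opening moves correctly: reduce to $\kappa(G)=4$ via Theorem~\ref{thm1}, pick $v\in V_4(G)$, and check that $G/va_i$ is $4$-connected so that each non-contractible $va_i$ yields a balanced $5$-cut $T_i$. From there, however, the proposal is a plan rather than a proof, and the plan has a structural gap.

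The paper does \emph{not} run an extremal/minimal-fragment argument over all $va_i$. Instead it first classifies $G[N_G(v)]$ up to six isomorphism types using two dedicated lemmas: Lemma~\ref{triangle} (a triangle among three neighbours forces a quasi $5$-contractible edge) and Lemma~\ref{neighbor} (the independent case $G[N_G(v)]\cong 4K_1$ is impossible under the degree-sum hypothesis; this already requires a nontrivial two-step uncrossing). With those in hand it fixes \emph{two specific} quasi fragments $A$ (for $vx_3$) and $B$ (for $vx_4$), crosses them, and obtains the exact equalities $|A\cap T|=|\overline{A}\cap T|=|S\cap B|=|S\cap\overline{B}|=2$, $|S\cap T|=1$ together with $A\cap\overline{B}=\overline{A}\cap B=\overline{A}\cap\overline{B}=\emptyset$ (Claim~2). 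These equalities, not a minimality descent, are what force $G[N_G(v)]\cong 2K_2$ and drive the final contradiction (which again invokes a separate structural lemma, Lemma~\ref{subgraph}, to dispose of the $K_2\cup 2K_1$ and $P_3\cup K_1$ types).

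Your minimal-$|F|$ strategy is where the gap lies. Minimality is taken only over sides of cuts $T_i$ witnessing that $va_i$ is non-contractible, but after uncrossing $F$ with some $T_j$ the corners $F\cap A_j$, $F\cap B_j$ need not be quasi fragments with respect to any edge of the form $va_\ell$: the $5$-element boundary of a corner may contain $v$ but none of the $a_\ell$, or an $a_\ell$ but not $v$. So the ``strictly smaller side'' branch of your dichotomy does not feed back into the same minimization, and you cannot conclude a contradiction from it. The paper sidesteps this by never invoking minimality across different edges; it pins down the intersection pattern exactly and then reads off which neighbour-configurations survive. To make your approach work you would at minimum need analogues of Lemmas~\ref{neighbor} and~\ref{subgraph}, and a replacement for Claim~2 that does not rely on an ill-posed descent.
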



\section{Preliminaries}
In this section, we introduce some more definitions and preliminary lemmas.

Let $P_{n}$, $C_{n}$, $K_{1,n-1}$ and $K_{n}$ denote the path, the cycle, the star and the complete graph on $n$ vertices, respectively. For two graphs $G$ and $H$, let $G\cup H$ denote the union of $G$ and $H$, and let $G+H$ denote the join of $G$ and $H$. Moreover, for a positive integer $m$, let $mG$ stand for the union of $m$ copies of $G$.

For a graph $G$, let $E(x)$ denote the set of edges incident with $x\in V(G)$. Let $V_{k}(G)$ denote the set of vertices of degree $k$ in $G$. For $S\subseteq V(G)$, let $N_{G}(S)=\cup_{x\in S}N_{G}(x)-S$ and let $G[S]$ denote the subgraph induced by $S$. For $S, S^{\prime}\subseteq V(G)$ with $S\cap S^{\prime}=\emptyset$, we denote by $E_{G}(S, S^{\prime})$ the set of edges between $S$ and $S^{\prime}$. For $E^{\prime}\subseteq E(G)$, let $G[E^{\prime}]$ denote the subgraph induced by $E^{\prime}$.
Let $\kappa(G)$ denote its \emph{connectivity}, and let $\mathcal{T}(G):=\{S\subseteq V(G): G-S$ is disconnected and $|S|=\kappa(G)\}$ denote the set of its smallest cut sets. For $T\in\mathcal{T}(G)$, a \emph{$T$-fragment} is the union of the vertex sets of at least one but not of all components of $G-T$.
A \emph{fragment} is a $T$-fragment for some $T\in\mathcal{T}(G)$.

Let $G$ be a $k$-connected graph.
A fragment $A$ of $G$ is called a \emph{nontrivial fragment} if $|A|\geq2$ and $|V(G)-(A\cup N_{G}(A))|\geq2$.
Let $G$ be a quasi $k$-connected graph and let $E_{0}=\{e\in E(G):G/e$ is $(k-1)$-connected, but not quasi $k$-connected$\}$. Then for $xy\in E_{0}$, $G$ has a $k$-cut $T$ containing $xy$. Moreover, $G-T$ can be partitioned into two subgraphs, each containing at least two vertices. The vertex set of each such subgraph is called a \emph{quasi T-fragment} of $G$ or, briefly, a \emph{quasi fragment}.
A nontrivial (quasi) fragment with least cardinality is called a \emph{nontrivial (quasi) atom}.
For an edge $e$ of $G$, a nontrivial (quasi) fragment $A$ of $G$ is said to be a \emph{nontrivial (quasi) fragment with respect to e} if $V(e)\subseteq N_{G}(A)$.
For a set of edges $E^{\prime}\subseteq E(G)$, we say that $A$ is a \emph{nontrivial (quasi) fragment with respect to $E^{\prime}$} if $A$ is a nontrivial (quasi) fragment with respect to some $e\in E^{\prime}$.

To establish Theorem \ref{thm1}, we prove the following lemma.

\begin{lem}\label{nontrivial}
Let $G$ be both a 5-connected and a contraction critical quasi 5-connected graph. Let $A$ be a nontrivial fragment of $G$. If there exists a vertex $x\in N_{G}(A)$ such that $|N_{G}(x)\cap A|=1$, then $|A|=2$.
\end{lem}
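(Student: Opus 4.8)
The plan is to suppose, for a contradiction, that $|A|\geq 3$, and to contradict the contraction criticality of $G$. Write $T=N_G(A)$; $G$ cannot have connectivity $\geq 6$, since otherwise $G$ and each of its edge-contractions would be $\geq 5$-connected, hence quasi $5$-connected, so every edge of $G$ would be quasi $5$-contractible. Thus $\kappa(G)=5$ and $|T|=5$. Let $B=V(G)\setminus(A\cup T)$ be the complementary fragment, so $|B|\geq 2$, and let $a$ be the unique neighbour of $x$ in $A$. A first observation to record is the ``self-dual'' nature of the hypothesis: $A':=A\setminus\{a\}$ is again a fragment, with $N_G(A')=(T\setminus\{x\})\cup\{a\}$ and complementary fragment $B':=B\cup\{x\}$; since $|A'|=|A|-1\geq 2$ and $|B'|=|B|+1\geq 3$, the fragment $A'$ is \emph{nontrivial}, and moreover $x$ is the unique neighbour of $a$ inside $B'$, so $B'$ is a nontrivial fragment possessing a vertex $a\in N_G(B')$ with $|N_G(a)\cap B'|=1$. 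This makes a minimal-counterexample set-up natural: among all counterexamples choose one with $|A|$ least; then $A'$ cannot have such a special vertex and $B'$ must satisfy $|B'|\geq|A|$, i.e. $|B|\geq|A|-1$.

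The engine is contraction criticality applied to the edge $xa$. As $G$ is $5$-connected, $G/xa$ is $4$-connected; as $xa$ is not quasi $5$-contractible, $G/xa$ is not quasi $5$-connected, hence has a nontrivial $4$-cut, which (as in the paragraph preceding this lemma) must contain $\overline{xa}$ and therefore corresponds to a $5$-cut $T_0\supseteq\{x,a\}$ of $G$ for which $G-T_0$ splits into two parts $C_1,C_2$ with $|C_1|,|C_2|\geq 2$ and no edge between them. Since $|T_0|=5=\kappa(G)$, $T_0$ is a minimum cut and $N_G(C_1)=N_G(C_2)=T_0$.

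Next I would perform the standard cross-intersection analysis of the two minimum cuts $T$ and $T_0$, splitting $V(G)$ into the nine cells $A\cap C_1,\dots,T\cap T_0$, with $a\in A\cap T_0$ and $x\in T\cap T_0$. The crucial input is that $N_G(x)\cap A=\{a\}\subseteq A\cap T_0$, so $x$ has no neighbour in $A\cap C_1$ or $A\cap C_2$; hence for $i\in\{1,2\}$ one gets the sharpened bound $N_G(A\cap C_i)\subseteq (A\cap T_0)\cup(T\cap C_i)\cup\bigl((T\cap T_0)\setminus\{x\}\bigr)$, i.e. $|N_G(A\cap C_i)|\leq|A\cap T_0|+|T\cap C_i|+|T\cap T_0|-1$. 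Feeding this, together with the routine bound $N_G(A\cup C_i)\subseteq (B\cap T_0)\cup(T\cap C_{3-i})\cup(T\cap T_0)$, into the submodular inequality $|N_G(A\cap C_i)|+|N_G(A\cup C_i)|\leq|N_G(A)|+|N_G(C_i)|=10$ yields $10\leq 9$ as soon as both $A\cap C_i\neq\emptyset$ and $B\cap C_{3-i}\neq\emptyset$ (which is precisely what forces both sets on the left to be genuine $5$-cuts). This contradiction kills every ``crossing'' overlap pattern and leaves only the laminar ones: $A\subseteq T_0$, or $B\subseteq T_0$, or (for some $i$) $A\subseteq T_0\cup C_i$ together with $C_{3-i}\subseteq T$.

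It remains to dispose of these few laminar cases by hand, and I expect this to be the main obstacle, since the submodular count that killed the crossing cases leaves slack here. One would use, for instance, that if $A\subseteq T_0$ then $|A|\leq|T_0|-|T\cap T_0|\leq 4$ and each vertex of $A$, lying in $T_0=N_G(C_1)=N_G(C_2)$, has a neighbour in both $T\cap C_1$ and $T\cap C_2$; and that if $C_{3-i}\subseteq T$ then each vertex of $C_{3-i}$ has all its neighbours in $T_0$, so by $\delta(G)\geq 5=|T_0|$ it has degree exactly $5$ with neighbourhood $T_0$. Combining such degree counts with the facts that $G$ has no nontrivial $4$-cut and no quasi $5$-contractible edge (applied to a suitable edge incident with such a degree-$5$ vertex, or with $a$), and with the self-dual observation of the first paragraph (which in several of these cases lets one re-apply the argument to the nontrivial fragment $B'$ or to $A'$ and contradict the minimality of $|A|$ or the bound $|B|\geq|A|-1$), one reaches in every case a contradiction with $|A|\geq 3$. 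Hence $|A|=2$.
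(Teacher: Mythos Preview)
Your setup and the central ``crossing'' computation are essentially the same as the paper's: you pick the edge $xa$, invoke contraction criticality to get a second $5$-cut $T_0$ through $\{x,a\}$, and use that $x$ has no neighbour in $A\cap C_i$ to sharpen the corner bound by $1$, which together with the opposite corner yields $10\le 9$. This is exactly the inequality $|(S\cap B)\cup(S\cap T)\cup(A\cap T)|\ge 6$ that drives the paper's argument.

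The gap is that you do not actually finish the ``laminar'' cases; you only list hints, and those hints are not enough. In particular your claim that ``if $C_{3-i}\subseteq T$ then each vertex of $C_{3-i}$ has all its neighbours in $T_0$'' is false as stated (such vertices can have neighbours inside $C_{3-i}$), and neither the self-dual observation about $A'=A\setminus\{a\}$ nor the minimality bound $|B|\ge|A|-1$ is what closes these cases. The paper dispenses with both devices and simply continues the same counting you began. For instance, in your case $A\subseteq T_0$: then $|A\cap T_0|\ge 3$, so $|T\cap T_0|+|B\cap T_0|\le 2$; assuming by symmetry $B\cap C_1\neq\emptyset$ gives $|T\cap C_1|\ge 5-2=3$, hence $|T\cap C_2|\le 1$, which forces $B\cap C_2=\emptyset$ and thus $|C_2|\le 1$, contradicting $|C_2|\ge 2$. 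The case $C_{3-i}\subseteq T$ is handled the same way: if also $A\cap C_i\neq\emptyset$, your sharpened bound gives $|A\cap T_0|\ge 3$, whence the corner bound for $B\cap C_i$ fails and $B\subseteq T_0$, reducing to the remaining case. Finally, $B\subseteq T_0$ combined with $A\cap C_i\neq\emptyset$ for some $i$ forces $|A\cap T_0|\ge 3$ from your inequality, so $|B|\le|B\cap T_0|\le 1$, contradiction. All of this is a few lines of the same arithmetic; no appeal to further contraction criticality, to degree-$5$ vertices, or to a minimal counterexample is needed. Replace your last paragraph by these direct counts and the proof is complete and essentially identical to the paper's.
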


\begin{proof}
By the connectivity condition of the graph $G$, for each edge $e$ of $G$, there exists a nontrivial fragment with respect to $e$.
Suppose that $|A|\geq3$. Let $S=N_{G}(A)$ and $\overline{A}=V(G)-(A\cup S)$. Let $\{x_{1}\}=N_{G}(x)\cap A$, and let $B$ be a nontrivial fragment with respect to $xx_{1}$. Let $T=N_{G}(B)$ and $\overline{B}=V(G)-(B\cup T)$. Then we see that $x\in S\cap T$ and $x_{1}\in A\cap T$.
If $A\cap B=\emptyset$ and $A\cap\overline{B}=\emptyset$, then $|A\cap T|=|A|\geq3$, and hence $|\overline{A}\cap T|\leq1$. Since $|\overline{A}|\geq2$, we have that either $\overline{A}\cap B\neq\emptyset$ or $\overline{A}\cap\overline{B}\neq\emptyset$. Without loss of generality, assume that $\overline{A}\cap B\neq\emptyset$. Then $|S\cap B|\geq3$, and thus $|S\cap\overline{B}|\leq1$. But then $\overline{A}\cap\overline{B}=\emptyset$, so $|\overline{B}|\leq1$, a contradiction.
Therefore, $A\cap B\neq\emptyset$ or $A\cap\overline{B}\neq\emptyset$.

Without loss of generality, assume that $A\cap B\neq\emptyset$. Since $\{x_{1}\}=N_{G}(x)\cap A$, we have $N_{G}(x)\cap(A\cap B)=\emptyset$, and hence $|(S\cap B)\cup(S\cap T)\cup(A\cap T)|\geq6$. It follows that $|S\cap B|>|\overline{A}\cap T|$ and $|A\cap T|>|S\cap\overline{B}|$. Moreover, the fact $|S|+|T|=10$ assures us that $|(\overline{A}\cap T)\cup(S\cap T)\cup(S\cap\overline{B})|\leq4$, which implies that $\overline{A}\cap\overline{B}=\emptyset$.
If $A\cap\overline{B}\neq\emptyset$, we have $\overline{A}\cap B=\emptyset$ and $|S\cap\overline{B}|>|\overline{A}\cap T|$ by symmetry. Since $|S|=5$, $|\overline{A}\cap T|<2$, and thus $|\overline{A}|<2$, a contradiction.
So $A\cap\overline{B}=\emptyset$. Since $|\overline{B}|\geq2$, we have $|S\cap\overline{B}|\geq2$. Then $|A\cap T|\geq3$, and thus $|\overline{A}\cap T|\leq1$. This implies that $|(S\cap B)\cup(S\cap T)\cup(\overline{A}\cap T)|\leq4$, and so $\overline{A}\cap B=\emptyset$. But then $|\overline{A}|\leq1$, a contradiction.
\end{proof}

The following two lemmas appear as Lemma 5 and 7 in \cite{Kou}.

\begin{lem}\label{degree}
Let $G$ be a quasi 5-connected graph. If $xy\in E(G)$ and $\delta(G/xy)\geq4$, then $G/xy$ is 4-connected.
\end{lem}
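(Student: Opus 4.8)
The plan is to argue by contradiction. Assume $G/xy$ is not $4$-connected. Since $\delta(G/xy)\ge 4$ forces $|V(G/xy)|\ge 5$, the failure of $4$-connectivity must come from a vertex cut $T'$ of $G/xy$ with $|T'|\le 3$. I would split into two cases according to whether the contracted vertex $\overline{xy}$ belongs to $T'$.

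If $\overline{xy}\notin T'$, then $T'$ is a subset of $V(G)$ with $|T'|\le 3$, and since $x$ and $y$ stay in the same component of $(G/xy)-T'$ (the one inheriting $\overline{xy}$), the graph $G-T'$ is disconnected as well; this contradicts $\kappa(G)\ge 4$, which holds because $G$ is quasi $5$-connected and hence $4$-connected. So $\overline{xy}\in T'$.

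In the remaining case put $T:=(T'\setminus\{\overline{xy}\})\cup\{x,y\}$. Then $G-T=(G/xy)-T'$ is disconnected, so $|T|\ge\kappa(G)\ge 4$; combined with $|T|\le|T'|+1\le 4$ this yields $|T|=4$ and $|T'|=3$. Hence $T$ is a $4$-cut of $G$ containing the edge $xy$. Since $G$ is quasi $5$-connected, $T$ is not a nontrivial $4$-cut, and I would then verify the elementary observation that this, together with $G-T$ being disconnected, forces $G-T$ to contain a component that is a single vertex $u$: running through the possible component-size profiles of $G-T$, any profile admitting a partition of the components into two parts each spanning at least two vertices would make $T$ nontrivial, so the only survivors are ``one large component plus one isolated vertex'' or ``two or three isolated vertices,'' each of which contains an isolated vertex. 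For such a vertex $u$ we have $N_G(u)\subseteq T$ and $d_G(u)\ge 4=|T|$, hence $N_G(u)=T$; in particular $x,y\in N_G(u)$, and therefore $d_{G/xy}(u)=|T|-1=3<4$, contradicting $\delta(G/xy)\ge 4$. This finishes the argument.

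The only step requiring any care is the combinatorial observation in the last case that a non-nontrivial $4$-cut of a $4$-connected graph whose removal disconnects the graph must leave an isolated vertex; but this is just a finite case check on the component-size profile, so I do not expect it to be a genuine obstacle, and everything else reduces to routine cut-lifting between $G$ and $G/xy$.
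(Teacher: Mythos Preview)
Your argument is correct. The paper does not supply its own proof of this lemma but simply quotes it from \cite{Kou}, so there is no proof in the paper to compare against; your contradiction argument via lifting a small cut of $G/xy$ back to a $4$-cut of $G$ and then invoking the absence of nontrivial $4$-cuts to produce an isolated vertex of degree $3$ in $G/xy$ is exactly the natural route and needs no further justification.
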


\begin{lem}\label{triangle}
Let $G$ be a quasi 5-connected graph on at least 8 vertices. If there is a vertex $x\in V_{4}(G)$ such that $N_{G}(x)=\{x_{1}, x_{2}, x_{3}, x_{4}\}$ and $G[\{x_{1}, x_{2}, x_{3}\}]\cong K_{3}$, then $xx_{4}$ is a quasi 5-contractible edge of $G$.
\end{lem}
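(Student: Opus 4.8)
The plan is to assume $G/xx_4$ is \emph{not} quasi $5$-connected and derive a contradiction with the quasi $5$-connectivity of $G$; write $\overline{xx_4}$ for the new vertex. The single structural fact driving everything is that every nonempty subset of $\{x_1,x_2,x_3\}$ induces a connected subgraph of $G$, so once $x$ lies in a vertex set $S$ together with $x_4$, all neighbors of $x$ outside $S$ fall into a single component of $G-S$.

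First I would show $\delta(G/xx_4)\ge 4$ and invoke Lemma \ref{degree} to conclude that $G/xx_4$ is $4$-connected. For $i\in\{1,2,3\}$, if $x_i$ had degree $4$ in $G$ and were adjacent to $x_4$, then $N_G(x_i)=\{x,x_4\}\cup(\{x_1,x_2,x_3\}\setminus\{x_i\})$; deleting the three vertices of $N_G(x_i)\setminus\{x\}$ would then cut off $\{x,x_i\}$ from the rest of $G$, which is nonempty since $|V(G)|\ge 8$, giving a $3$-cut and contradicting $\kappa(G)=4$. Hence a degree-$4$ member of $\{x_1,x_2,x_3\}$ is nonadjacent to $x_4$ and so keeps degree $4$ in $G/xx_4$, a member of degree at least $5$ loses at most one and stays at degree at least $4$, and every other vertex of $G$ retains its degree. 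A symmetric argument rules out $N_G(x_4)\setminus\{x\}\subseteq\{x_1,x_2,x_3\}$ (otherwise $N_G(x_4)=\{x,x_1,x_2,x_3\}$ and deleting $\{x_1,x_2,x_3\}$ cuts off $\{x,x_4\}$), so $\overline{xx_4}$ also has degree at least $4$. Thus $\delta(G/xx_4)\ge 4$, and $G/xx_4$ is $4$-connected by Lemma \ref{degree}.

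It remains to exclude a nontrivial $4$-cut $S'$ of $G/xx_4$, with $G/xx_4-S'$ partitioned into parts $G_1,G_2$ each having at least two vertices. If $\overline{xx_4}\notin S'$, then regarding $S'$ as a subset of $V(G)$ and replacing $\overline{xx_4}$ by the adjacent pair $\{x,x_4\}$ within the part that contained it turns $S'$ into a nontrivial $4$-cut of $G$, contradicting quasi $5$-connectivity. So $\overline{xx_4}\in S'$; set $S:=(S'\setminus\{\overline{xx_4}\})\cup\{x,x_4\}$, a set of five vertices whose deletion splits $G$ into $A:=V(G_1)$ and $B:=V(G_2)$, unions of components of $G-S$ with $|A|,|B|\ge 2$. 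If $\{x_1,x_2,x_3\}\subseteq S$, then $N_G(x)\subseteq\{x_1,x_2,x_3,x_4\}$ is emptied by deleting $\{x_1,x_2,x_3,x_4\}$, making the latter a $4$-cut of $G$ with sides $A\cup\{x\}$ and $B$, i.e.\ a nontrivial $4$-cut — contradiction. Otherwise some $x_i\notin S$; the members of $\{x_1,x_2,x_3\}$ not in $S$ induce a connected subgraph of $G-S$ and hence lie entirely in one part, say $A$. Then $N_G(A\cup\{x\})\subseteq S\setminus\{x\}$, because $N_G(A)\subseteq S$ and the neighbors of $x$ missing from $A\cup\{x\}$ are only $x_4$ together with the $x_i$'s lying in $S$. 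As $B$ is nonempty and disjoint from $A\cup\{x\}\cup N_G(A\cup\{x\})$, the $4$-connectivity of $G$ forces $|N_G(A\cup\{x\})|=4$; then $A\cup\{x\}$ (of size at least $3$) and $B$ (of size at least $2$) again witness a nontrivial $4$-cut of $G$ — contradiction. Hence no such $S'$ exists, $G/xx_4$ is quasi $5$-connected, and $xx_4$ is quasi $5$-contractible.

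The main obstacle is the case $\overline{xx_4}\in S'$: there $S$ is only a $5$-cut of $G$, not a minimum one, so one cannot appeal to the principle that each vertex of a minimum cut meets every component of the complement; instead one must build an honest nontrivial $4$-cut of $G$ by grafting $x$ onto one side, and it is precisely the triangle $x_1x_2x_3$ that confines the surviving neighbors of $x$ to that side so that the graft does not enlarge the cut. A secondary point is the role of the hypothesis $|V(G)|\ge 8$: it excludes the small degenerate configurations (for instance $G\cong K_5$, where $G/xx_4\cong K_4$ is not even $4$-connected) for which the statement is false.
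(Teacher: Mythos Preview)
Your proof is correct. Note, however, that the paper does not supply its own argument for this lemma: it is quoted verbatim as Lemma~7 of \cite{Kou}, so there is no in-paper proof to compare against. Your approach is the natural one and almost certainly matches the cited source in spirit: first verify $\delta(G/xx_4)\ge 4$ so that Lemma~\ref{degree} yields $4$-connectivity of $G/xx_4$, then dispose of a hypothetical nontrivial $4$-cut $S'$ of $G/xx_4$ by lifting it back to $G$. The case split on whether $\overline{xx_4}\in S'$ is standard, and in the nontrivial case the key observation---that the triangle on $\{x_1,x_2,x_3\}$ forces all of $N_G(x)\setminus S$ into a single side, so that adjoining $x$ to that side produces an honest nontrivial $4$-cut $S\setminus\{x\}$ of $G$---is exactly the right mechanism. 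Your handling of the degree check (ruling out $d_G(x_i)=4$ with $x_ix_4\in E(G)$, and $N_G(x_4)\subseteq\{x,x_1,x_2,x_3\}$, via explicit $3$-cuts) is clean and uses the hypothesis $|V(G)|\ge 8$ in the right place.
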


The following three lemmas are also needed in the proof of Theorem \ref{thm2}.

\begin{lem}\cite{Martinov}\label{description}
A 4-connected graph is contraction critical if and only if it is 4-regular and each of its edges belongs to a triangle.
\end{lem}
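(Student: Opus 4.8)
The plan is to prove the two implications separately; only the forward one requires real work. Sufficiency is immediate: suppose $G$ is $4$-connected, $4$-regular, and every edge lies on a triangle. Fix an edge $xy$ and a common neighbour $z$ of $x$ and $y$. When $xy$ is contracted, the two edges $zx$ and $zy$ become a single edge, so $d_{G/xy}(z)=d_{G}(z)-1=3$; hence $\kappa(G/xy)\le 3<4$ and $xy$ is not $4$-contractible. Since $xy$ was arbitrary, $G$ has no $4$-contractible edge.

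For necessity I would first record the standard reformulation: if $G$ is $4$-connected, then for an edge $uv$ the graph $G/uv$ is $4$-connected if and only if $G$ has \emph{no} $4$-cut containing $\{u,v\}$ --- a $3$-cut of $G/uv$ must contain $\overline{uv}$ (otherwise it is a $3$-cut of $G$), and a $4$-cut $T\supseteq\{u,v\}$ of $G$ gives the $3$-cut $(T\setminus\{u,v\})\cup\{\overline{uv}\}$ of $G/uv$. Hence, if $G$ is contraction critical, every edge of $G$ lies in some $4$-cut, and it remains to show (i) every edge lies on a triangle and (ii) $G$ is $4$-regular. For (i), suppose some edge $xy$ has $N_{G}(x)\cap N_{G}(y)=\emptyset$. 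Among all $4$-cuts of $G$ containing $\{x,y\}$ and all of their fragments, pick a fragment $A$ of minimum size; put $T=N_{G}(A)$ (so $\{x,y\}\subseteq T$) and $\overline{A}=V(G)\setminus(A\cup T)$. As $T$ is a minimum cut, $x$ has a neighbour $x_{1}\in A$, and the edge $xx_{1}$ lies in a $4$-cut $T'\supseteq\{x,x_{1}\}$ with a fragment $A'$. The heart of the argument is the classical crossing-fragments estimate coming from submodularity of the external-neighbourhood function, $|N_{G}(A\cap A')|+|N_{G}(A\cup A')|\le|N_{G}(A)|+|N_{G}(A')|=8$: when $A\cap A'$ and its complement are both nonempty this forces $|N_{G}(A\cap A')|=4$, yielding a fragment no larger than $A$ that omits $x_{1}\in A\cap T'$ and contradicting minimality; disposing of the non-crossing configurations in the same style corners a common neighbour of $x$ and $y$, contrary to the choice of $xy$. (A few genuinely small graphs, where $\overline{A}$ is too small for the corner sets to be proper separators, are checked directly.)

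For (ii), Egawa's theorem (Theorem~\ref{thmA}) with $k=4$ gives $\delta(G)\le 4$, hence $\delta(G)=4$. Take $v\in V_{4}(G)$ with $N_{G}(v)=\{v_{1},v_{2},v_{3},v_{4}\}$. For each $i$ the edge $vv_{i}$ lies in a $4$-cut $T_{i}\supseteq\{v,v_{i}\}$; since $d_{G}(v)=4$ and $v\in T_{i}$, only three neighbours of $v$ are available outside $\{v_{i}\}$, and because a minimum cut forces $v$ to have a neighbour in every component of $G-T_{i}$, at most one of those three lies in $T_{i}$. Recording the locations of $v_{1},\dots,v_{4}$ relative to $T_{i}$ and its fragments, and using the triangle condition established in (i), constrains $N_{G}[v]$ enough to force each $v_{i}\in V_{4}(G)$; connectedness of $G$ then spreads $4$-regularity to all of $V(G)$.

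I expect the genuine difficulty to be the case analysis inside (i) and (ii): in each one must enumerate how a second $4$-cut meets the tripartition $(A,T,\overline{A})$ --- respectively how the cuts $T_{i}$ meet $N_{G}[v]$ --- and eliminate every configuration except the one that exhibits a triangle, respectively a degree-$4$ neighbour. The submodularity inequalities carry most of the load, but the bookkeeping of the small ``corner'' sets is delicate, and it is there that the hypotheses $N_{G}(x)\cap N_{G}(y)=\emptyset$ and $d_{G}(v)=4$ are actually consumed.
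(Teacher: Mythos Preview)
The paper does not prove this lemma; it is quoted as a known result from \cite{Martinov} and invoked without argument, so there is no paper-proof to compare your proposal against.

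On your outline itself: the sufficiency direction is complete and correct. The necessity direction has the right architecture --- submodularity of neighbourhoods plus a smallest-fragment choice --- but as written it is a plan rather than a proof, and there is a concrete gap in each half. In~(i), after crossing with the second cut $T'\supseteq\{x,x_{1}\}$ the set $A\cap A'$ may well have a $4$-element neighbourhood, but that neighbourhood need not contain $y$; hence $A\cap A'$ is not obviously a smaller fragment \emph{with respect to a cut through $\{x,y\}$}, and the intended contradiction to the minimality of $A$ does not follow as stated. You would need either a different minimality hypothesis (e.g.\ over all fragments with respect to edges at $x$) or an extra argument placing $y$ in the new separator. In~(ii), the step ``connectedness of $G$ then spreads $4$-regularity to all of $V(G)$'' presupposes exactly what must be shown, namely that \emph{every} neighbour of a degree-$4$ vertex again has degree~$4$; your sketch of how the cuts $T_{i}$ constrain $N_{G}[v]$ stops short of delivering that. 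These two points are where real work remains; the rest of the scaffolding is sound.
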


\begin{lem}\label{neighbor}
Let $G$ be a contraction critical quasi 5-connected graph, such that $d_{G}(u)+d_{G}(v)\geq9$ for every pair $u$, $v$ of vertices of $G$ at distance one or two. Then $G$ does not contain a vertex $x\in V_{4}(G)$ such that $G[N_{G}(x)]\cong4K_{1}$.
\end{lem}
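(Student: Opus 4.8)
The plan is to argue by contradiction. Suppose $G$ is a contraction critical quasi 5-connected graph satisfying the degree sum hypothesis, and suppose there is a vertex $x\in V_4(G)$ with $N_G(x)=\{x_1,x_2,x_3,x_4\}$ and $G[N_G(x)]\cong 4K_1$, i.e. no two neighbors of $x$ are adjacent. First I would record the immediate numerical consequences of the degree sum condition: any two vertices at distance $\le 2$ have degree sum $\ge 9$, so at most one vertex in any such pair has degree $4$; in particular, since $x$ has degree $4$, every neighbor $x_i$ of $x$ has $d_G(x_i)\ge 5$, and more generally every vertex at distance $\le 2$ from $x$ has degree $\ge 5$ (as any two such vertices are themselves within distance $2$ of each other via $x$ only when they are both neighbors — here I would need to be slightly careful and instead use that each $x_i$ is at distance $1$ from $x$, a degree-$4$ vertex, forcing $d_G(x_i)\ge 5$).

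Next I would exploit that $G$ has a quasi 5-contractible edge nowhere, so in particular $xx_i$ is not quasi 5-contractible for each $i$. Since $d_G(x)=4$, contracting $xx_i$ gives a vertex $\overline{xx_i}$ of degree $d_G(x)+d_G(x_i)-2-|N_G(x)\cap N_G(x_i)|$; because $G[N_G(x)]\cong 4K_1$, the three other neighbors $x_j$ ($j\ne i$) of $x$ are not adjacent to $x_i$, so $N_G(x)\cap N_G(x_i)$ consists only of possible common neighbors outside $\{x_1,x_2,x_3,x_4\}$. I would show $\delta(G/xx_i)\ge 4$: the only vertex whose degree could drop is $\overline{xx_i}$ or a common neighbor of $x$ and $x_i$, and a common neighbor $w$ has degree $\ge 5$ (distance $1$ from $x$), so $d_{G/xx_i}(w)\ge 4$; and $d_{G/xx_i}(\overline{xx_i})\ge d_G(x_i)-1\ge 4$ using that $x$'s other three neighbors survive as neighbors of $\overline{xx_i}$ unless they coincide with neighbors of $x_i$, which they don't by the $4K_1$ assumption — this actually gives $d_{G/xx_i}(\overline{xx_i})\ge 3 + (d_G(x_i)-1-|N_G(x)\cap N_G(x_i)|) \ge \ldots$, and I would check this is $\ge 4$. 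Hence by Lemma \ref{degree}, $G/xx_i$ is 4-connected, and since $xx_i$ is not quasi 5-contractible, $G/xx_i$ has a nontrivial 4-cut; pulling this back to $G$, there is a 5-cut $T_i$ of $G$ with $\{x,x_i\}\subseteq T_i$ separating two parts each of size $\ge 2$. So each $x_i$ lies with $x$ in such a nontrivial-quasi-fragment-producing 5-cut; equivalently, for each edge $xx_i$ there is a nontrivial quasi fragment with respect to $xx_i$.

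From here I would run the standard fragment-intersection argument. Let $A$ be a nontrivial quasi atom with respect to $E(x)$ — a nontrivial quasi fragment of minimum cardinality whose boundary contains some edge $xx_i$ — and let $T=N_G(A)$, $\overline A = V(G)-(A\cup T)$; then $x\in T$, $x_i\in A$ (say), and also one of the other neighbors $x_j$ must lie in $A\cup\overline A$ since $x$ has four neighbors but $|T|=5$ and $|T\cap N_G(x)|$ is limited. Using that $x$ contributes only one neighbor ($x_i$) to $A$ (if a second neighbor $x_k\in A$, then $x_k\notin N_G(x_i)$ and one gets extra room) and the $4K_1$ structure, I would intersect $A$ with the fragments $B$ arising from another edge $xx_k$ with $x_k\notin T$, and derive via the submodularity of $|N_G(\cdot)|$ (the inequality $|N_G(A\cap B)|+|N_G(A\cup B)|\le |N_G(A)|+|N_G(B)|$, in the corners-counting form already used in the proof of Lemma \ref{nontrivial}) that one of the small corners $A\cap B$, $A\cap \overline B$ etc. is nonempty yet too small, contradicting that nontrivial quasi fragments have $\ge 2$ vertices on each side, or contradicting minimality of the atom $A$.

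The main obstacle I anticipate is the bookkeeping in this last corner-counting step: I must simultaneously track where the four independent neighbors $x_1,\dots,x_4$ of $x$ sit relative to $A$, $T$, $\overline A$ and to $B$, $S:=N_G(B)$, $\overline B$, use the independence $G[N_G(x)]\cong 4K_1$ to say $x$ can have at most one neighbor in any single one of the nine intersection cells that does not contain $x$ itself — wait, that's false; rather, the independence controls adjacencies so that if two of $x$'s neighbors land in $A\cap B$ the boundary $N_G(A\cap B)$ must still be $\ge 5$, pushing a cardinality bound that collides with $|T|+|S|=10$. I would also have to separately dispose of the degenerate possibility that no suitable second edge $xx_k$ with $x_k\notin T$ exists, i.e. that $N_G(x)\setminus\{x_i\}\subseteq T$; but then $|T\cap N_G(x)|\ge 3$ forces at most two further vertices in $T$, and combined with $5$-connectivity and $|A|,|\overline A|\ge 2$ this is easily ruled out by a direct count. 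Once every case yields a contradiction, no such vertex $x$ exists, proving the lemma.
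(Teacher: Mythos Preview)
Your overall strategy matches the paper's: assume such an $x$ exists, use Lemma~\ref{degree} to see each $G/xx_i$ is 4-connected, take a quasi atom $A$ with respect to $E(x)$, and intersect it with a second quasi fragment $B$. One slip to fix: if $A$ is a quasi fragment with respect to $xx_i$, then by definition both $x$ and $x_i$ lie in $N_G(A)$, not $x_i\in A$ as you wrote. What you do get (and what the paper uses) is that some \emph{other} neighbor $x_j$ of $x$ lies in $A$, since otherwise $N_G(A)\setminus\{x\}$ would be a nontrivial 4-cut; the second fragment $B$ is taken with respect to $xx_j$.

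The genuine gap is that the $A$--$B$ intersection by itself does \emph{not} yield a contradiction. In the paper, with $A$ the quasi atom (cut $S$) with respect to $xx_1$ and $x_2\in A$, and $B$ a quasi fragment (cut $T$) with respect to $xx_2$, the corner count only gives $A\cap B=A\cap\overline B=\emptyset$, $|A\cap T|=|S\cap B|=|S\cap\overline B|=|\overline A\cap T|=2$, $|S\cap T|=1$. Writing $A\cap T=\{a,x_2\}$, $S\cap B=\{x_1,b\}$, $S\cap\overline B=\{a_1,a_2\}$, one pins down $N_G(a)=\{x_2,x_1,b,a_1,a_2\}$ and $N_G(x_2)=\{a,b,x,a_1,a_2\}$ --- specific structure, but no contradiction yet. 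The decisive step is a \emph{third} fragment $C$, taken with respect to the edge $ax_1$, which is \emph{not} incident to $x$; one first checks $\delta(G/ax_1)\ge 4$ so Lemma~\ref{degree} applies, then repeats the corner analysis of $C$ against both $S$ and $T$ to force a small corner containing a degree-4 vertex at distance at most two from $x$. Your outline stops before this and expects the two-fragment intersection to finish; it does not, and the idea of passing to an edge outside $E(x)$ is the missing ingredient.
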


\begin{proof}
Suppose that $G$ contains a vertex $x\in V_{4}(G)$ such that $G[N_{G}(x)]\cong4K_{1}$. Let $N_{G}(x)=\{x_{1}, x_{2}, x_{3}, x_{4}\}$. By Lemma \ref{degree}, $G/xx_{i}$ is 4-connected for $i\in\{1, 2, 3, 4\}$. Let $A$ be a quasi atom with respect to $E(x)$. Let $S=N_{G}(A)$ and $\overline{A}=V(G)-(A\cup S)$. Without loss of generality, assume that $A$ is a quasi fragment with respect to $xx_{1}$ and $x_{2}\in A$. Let $B$ be a quasi fragment with respect to $xx_{2}$. Let $T=N_{G}(B)$ and $\overline{B}=V(G)-(B\cup T)$. Then $x\in S\cap T$ and $x_{2}\in A\cap T$. Let $X_{1}=(S\cap B)\cup(S\cap T)\cup(A\cap T)$, $X_{2}=(S\cap B)\cup(S\cap T)\cup(\overline{A}\cap T)$, $X_{3}=(\overline{A}\cap T)\cup(S\cap T)\cup(S\cap\overline{B})$ and $X_{4}=(A\cap T)\cup(S\cap T)\cup(S\cap\overline{B})$.

Suppose $A\cap B\neq\emptyset$. Then $|X_{1}|\geq5$ since $N_{G}(x)\cap N_{G}(x_{2})=\emptyset$. If $|X_{1}|=5$, then $|A\cap B|=1$; Otherwise, $A\cap B$ is a quasi fragment with respect to $xx_{2}$ and $|A\cap B|<|A|$, a contradiction. However, we see that the vertex in $A\cap B$ has degree four and has distance one or two from $x$, a contradiction. Thus, $|X_{1}|\geq6$, which implies that $|A\cap T|>|S\cap\overline{B}|$ and $|X_{3}|\leq4$. Then we have $\overline{A}\cap\overline{B}=\emptyset$, and thus $|\overline{B}|<|A|$, contradicting the choice of $A$. So $A\cap B=\emptyset$, and similarly, $A\cap\overline{B}=\emptyset$.
Since $|A|\geq2$, we have $|A\cap T|\geq2$.
If $|X_{2}|\leq4$, then $\overline{A}\cap B=\emptyset$ and $|S\cap B|<|A\cap T|$. This implies $|B|<|A|$, a contradiction. So $|X_{2}|\geq5$. Similarly, $|X_{3}|\geq5$. Then we see that $|A\cap T|=|S\cap B|=|S\cap\overline{B}|=|\overline{A}\cap T|=2$ and $|S\cap T|=1$. Without loss of generality, we assume that $x_{1}\in S\cap B$. Let $A\cap T=\{a, x_{2}\}$, $S\cap B=\{x_{1}, b\}$ and $S\cap\overline{B}=\{a_{1}, a_{2}\}$. Then we have that $N_{G}(a)=\{x_{2}, x_{1}, b, a_{1}, a_{2}\}$ and $N_{G}(x_{2})=\{a, b, x, a_{1}, a_{2}\}$.

Note that $\delta(G/ax_{1})=4$. Then $G/ax_{1}$ is 4-connected by Lemma \ref{degree}. Let $C$ be a quasi fragment with respect to $ax_{1}$, and let $R=N_{G}(C)$, $\overline{C}=V(G)-(C\cup R)$. Clearly, $A=A\cap R$. Similar to the discussion in the previous paragraph, we have that $|S\cap C|=|S\cap\overline{C}|=2$ and $|S\cap R|=1$. That is, $x_{1}\in S\cap R$ and $|\{x, b, a_{1}, a_{2}\}\cap C|=|\{x, b, a_{1}, a_{2}\}\cap\overline{C}|=2$.
Without loss of generality, assume that $x\in C$. Thus, $x\in T\cap C$, $x_{1}\in B\cap R$ and $\{a, x_{2}\}\subseteq T\cap R$.
By degree sum condition, we have $d_{G}(b)\geq5$.
If $b\in C$, then $|(T\cap C)\cup(T\cap R)\cup(B\cap R)|\geq5$. Furthermore, $|(\overline{B}\cap R)\cup(T\cap R)\cup(T\cap\overline{C})|\geq5$ since $\{a_{1}, a_{2}\}\subseteq\overline{B}\cap\overline{C}$. This implies $|(T\cap C)\cup(T\cap R)\cup(B\cap R)|=5$. Then since $bx\notin E(G)$ and $N_{G}(\{a, x_{2}\})\cap(B\cap C)=\{b\}$, $|B\cap C|=2$. However, the other vertex in $B\cap C$ has degree four and is a neighbor of $x$, leading to a contradiction.
If $b\in\overline{C}$, we can similarly obtain a contradiction.
\end{proof}

\begin{figure}
  \centering
  \includegraphics{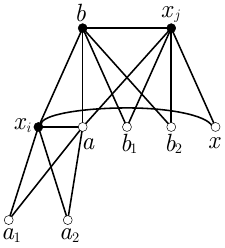}\\
  \caption{A black vertex indicates that the vertex has reached its maximum degree.}
  \label{fig1}
\end{figure}

\begin{lem}\label{subgraph}
Let $G$ be a quasi 5-connected graph, such that $d_{G}(u)+d_{G}(v)\geq9$ for every pair $u$, $v$ of vertices of $G$ at distance one or two. Suppose that Figure \ref{fig1} is a subgraph of $G$, where $d_{G}(x)=4$ and $xb_{1}, xb_{2}\notin E(G)$. Then $G/bx_{i}$ is quasi 5-connected.
\end{lem}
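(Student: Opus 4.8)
The plan is to argue by contradiction, so assume that $G/bx_{i}$ is not quasi 5-connected.

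\emph{Step 1 ($G/bx_{i}$ is $4$-connected).} By Lemma \ref{degree} it suffices to verify $\delta(G/bx_{i})\geq4$. Contracting $bx_{i}$ leaves every degree unchanged except that of the new vertex $\overline{bx_{i}}$ and those of the common neighbours of $b$ and $x_{i}$. If $v$ is such a common neighbour, then $v$ is adjacent to $x_{i}\in N_{G}(x)$, so $v$ lies within distance two of $x$, and $v\neq x$ because the adjacencies in Figure \ref{fig1} together with $xb_{1},xb_{2}\notin E(G)$ forbid $x$ from being adjacent to both $b$ and $x_{i}$; hence $d_{G}(v)\geq9-d_{G}(x)=5$, so $d_{G/bx_{i}}(v)\geq4$. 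Also $d_{G}(b)\geq5$ by the degree sum condition, and since the common neighbours of $b$ and $x_{i}$ all lie in $N_{G}(x_{i})-\{b\}$, we get $d_{G/bx_{i}}(\overline{bx_{i}})\geq d_{G}(b)+d_{G}(x_{i})-2-(d_{G}(x_{i})-1)=d_{G}(b)-1\geq4$. Thus $\delta(G/bx_{i})\geq4$, and $G/bx_{i}$ is $4$-connected by Lemma \ref{degree}.

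\emph{Step 2 (a $5$-cut of $G$).} Being $4$-connected but not quasi 5-connected, $G/bx_{i}$ has a nontrivial $4$-cut $T'$. If $\overline{bx_{i}}\notin T'$, then $T'\subseteq V(G)$ and $T'$ is a nontrivial $4$-cut of $G$ as well: the side of $(G/bx_{i})-T'$ containing $\overline{bx_{i}}$ expands to a side of $G-T'$ containing $b$ and $x_{i}$, and both sides still have at least two vertices, contradicting that $G$ is quasi 5-connected. Hence $\overline{bx_{i}}\in T'$, and $T:=(T'-\{\overline{bx_{i}}\})\cup\{b,x_{i}\}$ is a $5$-cut of $G$ with $bx_{i}\in E(G[T])$ that separates $G$ into quasi $T$-fragments $A$ and $\overline{A}$ with $|A|,|\overline{A}|\geq2$. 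Since $G$ has no nontrivial $4$-cut, $N_{G}(A)=N_{G}(\overline{A})=T$; put $S=T$, and take $A$ of minimum cardinality among all quasi fragments arising in this way.

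\emph{Step 3 (eliminating $T$).} It remains to show that no such $S$ can exist. The idea is to determine how the vertices of Figure \ref{fig1} — namely $x$, the neighbours of $x$, and $b_{1},b_{2}$ — are distributed among $A$, $S$, and $\overline{A}$. Since $b,x_{i}\in S$ and $x\notin S$ has degree $4$, the vertex $x$ lies in $A$ or in $\overline{A}$, and its four neighbours are shared out among $A$, $S-\{b,x_{i}\}$, and $\overline{A}$; the $4$-connectivity of $G$ forces any side meeting $\{x\}\cup N_{G}(x)$ to contribute enough vertices to $S$, and with $|S|=5$ this confines $x$, together with most of $N_{G}(x)$, to a single side. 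Each vertex drawn black in Figure \ref{fig1} has degree $4$, so by the degree sum condition any two such vertices are at distance at least three, which constrains the adjacencies still further. In each resulting configuration one reaches a contradiction — with the quasi 5-connectivity of $G$ (a nontrivial $4$-cut), with the degree sum condition (a degree-$4$ vertex within distance two of $x$), or, when a second cut is available, with the minimality of $A$ via an intersection argument in the manner of the proofs of Lemma \ref{nontrivial} and Lemma \ref{neighbor}. Running through the finitely many admissible positions of $T$ relative to Figure \ref{fig1} finishes the proof.

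The step I expect to be the main obstacle is Step 3: the $5$-cut $T$ is pinned down only by containing the edge $bx_{i}$, so it can cut through the configuration of Figure \ref{fig1} in several substantially different ways, and for the tightest configurations the contradiction does not come from counting alone — one has to re-select a fragment by intersecting with an auxiliary quasi fragment and then trace degrees, as in Lemma \ref{neighbor}, to close each case.
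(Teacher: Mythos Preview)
Your Steps 1 and 2 are fine and agree with the paper's set-up. The genuine gap is in Step 3, and it has two parts.

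First, you have misread Figure~\ref{fig1}. The black vertices are not degree-$4$ vertices; the caption means that a black vertex has \emph{all} of its $G$-neighbours drawn in the picture. Thus $b$, $x_i$, $x_j$ and $a$ each have degree exactly $5$, with
\[
N_G(b)=\{x_i,x_j,a,b_1,b_2\},\quad N_G(x_i)=\{x,b,a,a_1,a_2\},\quad N_G(x_j)=\{x,b,a,b_1,b_2\},\quad N_G(a)=\{x_i,x_j,b,a_1,a_2\}.
\]
Your assertion that ``any two such vertices are at distance at least three'' is therefore false (e.g.\ $b$ and $x_i$ are adjacent), and the case split you sketch cannot be organised around that premise.

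Second, the auxiliary-fragment intersection you anticipate is unnecessary; once the neighbourhoods above are known, Step 3 is a short direct argument. Since every neighbour of $b$ other than $x_i$ is also a neighbour of $x_j$, if $x_j\notin S$ then $N_G(b)$ misses one side of $S$, giving a nontrivial $4$-cut; hence $x_j\in S$. Now $N_G(\{b,x_j\})\setminus S\subseteq\{x,a,b_1,b_2\}$, and if only one of these lies in $A$ then $|A|=2$ with the second vertex of degree $4$ within distance two of $x$; so $\{x,a,b_1,b_2\}$ splits $2$--$2$ between $A$ and $\overline{A}$. Taking $x\in\overline{A}$, one checks $a\in A$ (else $N_G(x_i)\cap A=\emptyset$), whence $N_G(\{b,x_i,x_j\})\cap\overline{A}$ consists of $x$ and one $b_k$; replacing $\{b,x_i,x_j\}$ by these two vertices yields a $4$-set that would be a nontrivial cut unless $|\overline{A}|\leq 3$, and the two remaining sizes of $\overline{A}$ each produce a degree-$4$ vertex within distance two of $x$, using $xb_1,xb_2\notin E(G)$ at the very end. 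No second fragment, no minimality of $A$, and no crossing-cuts computation is required.
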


\begin{proof}
By Lemma \ref{degree}, $G/bx_{i}$ is 4-connected. Suppose that $G/bx_{i}$ is not quasi 5-connected. Then there exists a quasi fragment $A$ with respect to $bx_{i}$. Let $S=N_{G}(A)$ and $\overline{A}=V(G)-(A\cup S)$. Clearly, $x_{j}\in S$. If $|\{a, b_{1}, b_{2}, x\}\cap A|=1$, then $|N_{G}(\{b, x_{j}\})\cap A|=1$, which implies that $|A|=2$. However, another vertex in $A$ has degree four and has distance one or two from $x$, a contradiction. Therefore, $|\{a, b_{1}, b_{2}, x\}\cap A|\geq2$, and similarly, $|\{a, b_{1}, b_{2}, x\}\cap\overline{A}|\geq2$. Hence, $|\{a, b_{1}, b_{2}, x\}\cap A|=|\{a, b_{1}, b_{2}, x\}\cap\overline{A}|=2$.

Without loss of generality, assume that $x\in\overline{A}$.
If $a\in\overline{A}$, then $\{a_{1}, a_{2}\}\subseteq S\cup\overline{A}$, and thus $N_{G}(x_{i})\cap A=\emptyset$. It follows that $S-\{x_{i}\}$ is a nontrivial 4-cut of $G$, a contradiction. Therefore, $a\in A$. This implies that $\{a_{1}, a_{2}\}\subseteq A\cup S$. Without loss of generality, we assume that $b_{1}\in A$ and $b_{2}\in\overline{A}$. Then we see that $N_{G}(\{b, x_{i}, x_{j}\})\cap\overline{A}=\{b_{2}, x\}$. Thus, $|\overline{A}|\leq3$, for otherwise, $(S-\{b, x_{i}, x_{j}\})\cup\{b_{2}, x\}$ is a nontrivial 4-cut of $G$, which contradicts the quasi 5-connectivity of $G$.
If $|\overline{A}|=3$, we let $\{u\}=\overline{A}-\{b_{2}, x\}$. Then we see that $u\in N_{G}(x)$ and $d_{G}(u)=4$, a contradiction. So $\overline{A}=\{b_{2}, x\}$. Since $b_{2}x\notin E(G)$, $d_{G}(b_{2})=4$. However, two vertices $x$ and $b_{2}$ has distance two, a contradiction.
\end{proof}

\section{A proof of Theorem \ref{thm1}}
In this section, we give a proof of Theorem \ref{thm1}.

\begin{proof}[{\bf Proof of Theorem \ref{thm1}}]
Suppose that $G$ is a contraction critical quasi 5-connected graph. Then for each edge $e$ of $G$, there exists a nontrivial fragment with respect to $e$. Clearly, $|V(G)|\geq9$.
Take a nontrivial atom $A$. Let $S=N_{G}(A)$ and $\overline{A}=V(G)-(A\cup S)$.

\noindent{\bf Claim 1.} $|V(G)|\geq10$.

\begin{proof}
Suppose that $|V(G)|=9$. Then $|A|=|\overline{A}|=2$. Let $A=\{x, y\}$ and $S=\{v_{1}, v_{2}, v_{3}, v_{4}, v_{5}\}$. Without loss of generality, assume that $\{v_{1}, v_{2}, v_{3}, v_{4}\}\subseteq N_{G}(x)$ and $\{v_{1}, v_{2}, v_{3}, v_{5}\}\subseteq N_{G}(y)$. Let $B$ be a nontrivial fragment with respect to $xv_{4}$, and let $T=N_{G}(B)$, $\overline{B}=V(G)-(B\cup T)$. Clearly, $|B|=|\overline{B}|=2$ and $y\in T$.
If $|S\cap B|\leq1$, then $\overline{A}\cap B=\emptyset$, and thus $|B|\leq1$, a contradiction. Thus, $|B|=|S\cap B|=2$, and by symmetry, $|\overline{B}|=|S\cap\overline{B}|=2$. Without loss of generality, we assume that $S\cap B=\{v_{1}, v_{2}\}$ and $S\cap\overline{B}=\{v_{3}, v_{5}\}$. Then we see that none of the edges in $\{v_{1}v_{3}, v_{1}v_{5}, v_{2}v_{3}, v_{2}v_{5}\}$ exists.
Let $C$ be a nontrivial fragment with respect to $yv_{5}$, and let $R=N_{G}(C)$, $\overline{C}=V(G)-(C\cup R)$. Similarly, we can deduce that $x\in R$ and $|C|=|S\cap C|=|S\cap\overline{C}|=|\overline{C}|=2$. Without loss of generality, assume that $v_{1}\in C$. Since $v_{1}v_{5}\notin E(G)$, another vertex in $C$ is adjacent to both $v_{5}$ and $v_{1}$. This implies that $C=\{v_{1}, v_{4}\}$ and $\overline{C}=\{v_{2}, v_{3}\}$. Thus, none of the edges in $\{v_{1}v_{2}, v_{1}v_{3}, v_{4}v_{2}, v_{4}v_{3}\}$ exists. Then we see that $d_{G}(v_{2})\leq4$, a contradiction.
\end{proof}

Let $v_{1}\in S$ and $x\in N_{G}(v_{1})\cap A$. Let $B$ be a nontrivial fragment with respect to $xv_{1}$. Let $T=N_{G}(B)$ and $\overline{B}=V(G)-(B\cup T)$. We see that $x\in A\cap T$ and $v_{1}\in S\cap T$.

\noindent{\bf Claim 2.} If $A\cap B\neq\emptyset$, then $\overline{A}\cap\overline{B}\neq\emptyset$ and $|A\cap B|=1$;  If $A\cap\overline{B}\neq\emptyset$, then $\overline{A}\cap B\neq\emptyset$ and $|A\cap\overline{B}|=1$.

\begin{proof}
Suppose that $\overline{A}\cap\overline{B}=\emptyset$. Since $A\cap B\neq\emptyset$, then $|A\cap T|\geq|S\cap\overline{B}|$, and thus $|\overline{B}|<|A|$, a contradiction. So $\overline{A}\cap\overline{B}\neq\emptyset$. If $|A\cap B|\geq2$, then $|(S\cap B)\cup(S\cap T)\cup(A\cap T)|\geq6$; Otherwise, $A\cap B$ is a nontrivial fragment and $|A\cap B|<|A|$, which contradicts the choice of $A$. It follows that $|(\overline{A}\cap T)\cup(S\cap T)\cup(S\cap\overline{B})|\leq4$, so $\overline{A}\cap\overline{B}=\emptyset$, a contradiction. Thus $|A\cap B|=1$.
Another one can be proved similarly, we omit it.
\end{proof}

\noindent{\bf Claim 3.} $|A|\leq3$.

\begin{proof}
Suppose that $|A|\geq4$. If $A\cap B=\emptyset$ and $A\cap\overline{B}=\emptyset$, then $|A\cap T|=4$, $|S\cap T|=1$ and $\overline{A}\cap T=\emptyset$. Thus, we have that either $\overline{A}\cap B\neq\emptyset$ or $\overline{A}\cap\overline{B}\neq\emptyset$. Without loss of generality, assume that $\overline{A}\cap B\neq\emptyset$. This implies that $|S\cap B|=4$ and $S\cap\overline{B}=\emptyset$, and so $\overline{B}=\emptyset$, a contradiction.
Therefore, $A\cap B\neq\emptyset$ or $A\cap\overline{B}\neq\emptyset$.
Without loss of generality, we may assume that $A\cap B\neq\emptyset$. By Claim 2, $|A\cap B|=1$ and $\overline{A}\cap\overline{B}\neq\emptyset$. If $A\cap\overline{B}=\emptyset$, then $|A\cap T|\geq3$, and then, $|(S\cap T)\cup(\overline{A}\cap T)|\leq2$. This implies that $|S\cap\overline{B}|\geq3$, and so $|S\cap B|\leq1$. Thus, $\overline{A}\cap B=\emptyset$, and thus $|B|<|A|$, a contradiction.
So $A\cap\overline{B}\neq\emptyset$. By Claim 2, $|A\cap\overline{B}|=1$ and $\overline{A}\cap B\neq\emptyset$. Thus, $|A\cap T|\geq2$. Note that $|(S\cap B)\cup(S\cap T)\cup(\overline{A}\cap T)|\geq5$ and $|(\overline{A}\cap T)\cup(S\cap T)\cup(S\cap\overline{B})|\geq5$. It follows that $|S\cap B|\geq|A\cap T|$ and $|S\cap\overline{B}|\geq|A\cap T|$. Thus, $|S\cap B|=|S\cap\overline{B}|=|A\cap T|=|\overline{A}\cap T|=2$.

Let $A\cap B=\{a\}$, $A\cap T=\{b, x\}$ and $A\cap\overline{B}=\{c\}$. Clearly, $\{v_{1}a, v_{1}c\}\subseteq E(G)$.
Let $C$ be a nontrivial fragment with respect to $v_{1}a$, and let $R=N_{G}(C)$, $\overline{C}=V(G)-(C\cup R)$. Then $v_{1}\in S\cap R$ and $a\in A\cap R$.
From the arbitrariness of vertex $x$ in $A\cap N_{G}(v_{1})$, we can obtain that $|A\cap C|=|A\cap\overline{C}|=1$ and $|A\cap R|=2$. Since $ac\notin E(G)$, $c\in A\cap R$. Thus $v_{1}b\in E(G)$, and so $A\subseteq  N_{G}(v_{1})$. From the arbitrariness of vertex $v_{1}$ in $S$, $G[E_{G}(A, S)]$ is a complete bipartite graph, which is absurd.
\end{proof}

\noindent{\bf Claim 4.} $|A|=3$.

\begin{proof}
Suppose that $|A|=2$. Let $A=\{x, y\}$ and $S=\{v_{1}, v_{2}, v_{3}, v_{4}, v_{5}\}$.
Without loss of generality, we assume that $\{v_{1}, v_{2}, v_{3}, v_{4}\}\subseteq N_{G}(x)$ and $\{v_{1}, v_{2}, v_{3}, v_{5}\}\subseteq N_{G}(y)$. Let $B_{1}$ be a nontrivial fragment with respect to $xv_{4}$. Let $T_{1}=N_{G}(B_{1})$ and $\overline{B_{1}}=V(G)-(B_{1}\cup T_{1})$. Clearly, $y\in T_{1}$. Similar to Claim 1, we can obtain $|S\cap B_{1}|=|S\cap\overline{B_{1}}|=2$. Without loss of generality, assume that $S\cap B_{1}=\{v_{1}, v_{2}\}$ and $S\cap\overline{B_{1}}=\{v_{3}, v_{5}\}$.
Let $B_{2}$ be a nontrivial fragment with respect to $yv_{5}$. Let $T_{2}=N_{G}(B_{2})$ and $\overline{B_{2}}=V(G)-(B_{2}\cup T_{2})$. Then we have that $x\in T_{2}$ and $|S\cap B_{2}|=|S\cap\overline{B_{2}}|=2$. Note that $\{x, y\}\subseteq T_{1}\cap T_{2}$.

If $S\cap B_{2}=\{v_{1}, v_{2}\}$ and $S\cap\overline{B_{2}}=\{v_{3}, v_{4}\}$, then $\{v_{1}, v_{2}\}\subseteq B_{1}\cap B_{2}$, $v_{3}\in\overline{B_{1}}\cap\overline{B_{2}}$, $v_{4}\in T_{1}\cap\overline{B_{2}}$ and $v_{5}\in\overline{B_{1}}\cap T_{2}$. By the fact $|T_{1}|+|T_{2}|=10$, we have $|(T_{1}\cap B_{2})\cup(T_{1}\cap T_{2})\cup(B_{1}\cap T_{2})|=|(\overline{B_{1}}\cap T_{2})\cup(T_{1}\cap T_{2})\cup(T_{1}\cap\overline{B_{2}})|=5$. Then we see that $N_{G}(\{x, y\})\cap(\overline{B_{1}}\cap\overline{B_{2}})=\{v_{3}\}$, which implies $\overline{B_{1}}\cap\overline{B_{2}}=\{v_{3}\}$. Hence, $d_{G}(v_{3})=5$ and $N_{G}(v_{3})\supset\{x, y, v_{4}, v_{5}\}$. It follows that $|N_{G}(v_{3})\cap\overline{A}|=1$. Thus, $|\overline{A}|=2$ by Lemma \ref{nontrivial}, and thus, $|G|=9$, a contradiction.

Therefore, we have that either $S\cap B_{2}=\{v_{1}, v_{3}\}$ and $S\cap\overline{B_{2}}=\{v_{2}, v_{4}\}$ or $S\cap B_{2}=\{v_{1}, v_{4}\}$ and $S\cap\overline{B_{2}}=\{v_{2}, v_{3}\}$.
We may assume $S\cap B_{2}=\{v_{1}, v_{3}\}$ and $S\cap\overline{B_{2}}=\{v_{2}, v_{4}\}$ without loss of generality, and the discussion for the other case is similar. Then $v_{1}\in B_{1}\cap B_{2}$, $v_{2}\in B_{1}\cap\overline{B_{2}}$, $v_{3}\in\overline{B_{1}}\cap B_{2}$, $v_{4}\in T_{1}\cap\overline{B_{2}}$ and $v_{5}\in\overline{B_{1}}\cap T_{2}$. Thus, $|(T_{1}\cap B_{2})\cup(T_{1}\cap T_{2})\cup(B_{1}\cap T_{2})|\geq5$ and $|(B_{1}\cap T_{2})\cup(T_{1}\cap T_{2})\cup(T_{1}\cap\overline{B_{2}})|\geq5$. This implies that $|T_{1}\cap B_{2}|\geq|\overline{B_{1}}\cap T_{2}|$ and $|T_{1}\cap\overline{B_{2}}|\geq|\overline{B_{1}}\cap T_{2}|$. Since $|T_{1}\cap B_{2}|+|T_{1}\cap\overline{B_{2}}|\leq3$, $\overline{B_{1}}\cap T_{2}=\{v_{5}\}$. Since $v_{3}\in\overline{B_{1}}\cap B_{2}$, we have $|(T_{1}\cap B_{2})\cup(T_{1}\cap T_{2})\cup(\overline{B_{1}}\cap T_{2})|\geq5$, it follows that $T_{1}\cap\overline{B_{2}}=\{v_{4}\}$. We observe that $N_{G}(\{x, y\})\cap(\overline{B_{1}}\cap\overline{B_{2}})=\emptyset$. Then $\overline{B_{1}}\cap\overline{B_{2}}=\emptyset$, and thus $v_{4}v_{5}\in E(G)$. Moreover, $|N_{G}(v_{4})\cap\overline{B_{1}}|=|N_{G}(v_{5})\cap\overline{B_{2}}|=1$, which implies that $\overline{B_{1}}=\{v_{3}, v_{5}\}$ and $\overline{B_{2}}=\{v_{2}, v_{4}\}$ by Lemma \ref{nontrivial}. Then we see that $\{v_{3}v_{5}, v_{2}v_{4}\}\subseteq E(G)$, and so $G[\{v_{1}, v_{2}, v_{3}, v_{4}, v_{5}\}]\cong K_{1}\cup P_{4}$.

Let $B_{3}$ be a nontrivial fragment with respect to $xv_{1}$. Let $T_{3}=N_{G}(B_{3})$ and $\overline{B_{3}}=V(G)-(B_{3}\cup T_{3})$. If $y\in T_{3}$, then $A=A\cap T_{3}=\{x, y\}$, $v_{1}\in S\cap T_{3}$, and $|S\cap B_{3}|=|S\cap\overline{B_{3}}|=2$. This implies that $G[\{v_{2}, v_{3}, v_{4}, v_{5}\}]$ is not connected, a contradiction. So $y\notin T_{3}$. Without loss of generality, we assume that $y\in B_{3}$. Then we have $\{v_{2}, v_{3}, v_{5}\}\subseteq B_{3}\cup T_{3}$. Since $N_{G}(x)\cap\overline{B_{3}}\neq\emptyset$, $v_{4}\in\overline{B_{3}}$. It follows that $|N_{G}(x)\cap\overline{B_{3}}|=\{v_{4}\}$ and  $\{v_{2}, v_{5}\}\subseteq T_{3}$. By Lemma \ref{nontrivial}, $|\overline{B_{3}}|=2$. If $v_{3}\in T_{3}$, then $xy\in E(G)$ and $N_{G}(x)\cap B_{3}=\{y\}$, and thus, $|B_{3}|=2$ by Lemma \ref{nontrivial}. This implies that $|V(G)|=9$, a contradiction. So $v_{3}\in B_{3}$.

Let $B_{4}$ be a nontrivial fragment with respect to $yv_{1}$. Let $T_{4}=N_{G}(B_{4})$ and $\overline{B_{4}}=V(G)-(B_{4}\cup T_{4})$. Similar to the discussion on $B_{3}$, we have that $\{y, v_{1}, v_{3}, v_{4}\}\subseteq T_{4}$, $\{x, v_{2}\}\subseteq B_{4}$ and $v_{5}\in\overline{B_{4}}$ if we assume that $x\in B_{4}$ without loss of generality.
Clearly, $\{y, v_{3}\}\subseteq B_{3}\cap T_{4}$, $\{x, v_{2}\}\subseteq T_{3}\cap B_{4}$, $v_{1}\in T_{3}\cap T_{4}$, $v_{4}\in\overline{B_{3}}\cap T_{4}$ and $v_{5}\in T_{3}\cap\overline{B_{4}}$.
If $B_{3}\cap B_{4}\neq\emptyset$, then $|(T_{3}\cap B_{4})\cup(T_{3}\cap T_{4})\cup(B_{3}\cap T_{4})|\geq7$ since $N_{G}(\{x, y\})\cap(B_{3}\cap B_{4})=\emptyset$. Thus, $|T_{3}\cap T_{4}|=|\overline{B_{3}}\cap T_{4}|=|T_{3}\cap\overline{B_{4}}|=1$ and $|T_{3}\cap B_{4}|=|B_{3}\cap T_{4}|=3$. It follows that $B_{3}\cap\overline{B_{4}}=\overline{B_{3}}\cap\overline{B_{4}}=\emptyset$, and so $|\overline{B_{4}}|=1$, a contradiction.
Therefore, $B_{3}\cap B_{4}=\emptyset$.
Similarly, $B_{3}\cap\overline{B_{4}}=\overline{B_{3}}\cap B_{4}=\emptyset$. This implies that $N_{G}(x)\cap\overline{B_{3}}=\{v_{4}\}$ and $N_{G}(y)\cap\overline{B_{4}}=\{v_{5}\}$. By Lemma \ref{nontrivial}, $|\overline{B_{3}}|=|\overline{B_{4}}|=2$.
If $|\overline{B_{3}}\cap\overline{B_{4}}|=1$, then $|T_{3}\cap T_{4}|\geq3$, which is impossible. So $\overline{B_{3}}\cap\overline{B_{4}}=\emptyset$. It follows that $|V(G)|=9$, a contradiction.
\end{proof}

\noindent{\bf Claim 5.} $|A\cap T|=2$.

\begin{proof}
Suppose $|A\cap T|\neq2$. If $|A\cap T|=3$, then $A\cap B=A\cap\overline{B}=\emptyset$ and $|\overline{A}\cap T|\leq1$. It follows that either $\overline{A}\cap B\neq\emptyset$ or $\overline{A}\cap\overline{B}\neq\emptyset$. Without loss of generality, we assume that $\overline{A}\cap B\neq\emptyset$. Then $|S\cap B|\geq3$, and thus $|S\cap\overline{B}|\leq1$, so $\overline{A}\cap\overline{B}=\emptyset$. This implies that $|\overline{B}|\leq1$, a contradiction.
So $|A\cap T|=1$. By Claim 1, we have that $|A\cap B|=|A\cap\overline{B}|=1$, $\overline{A}\cap B\neq\emptyset$ and $\overline{A}\cap\overline{B}\neq\emptyset$. Thus, $|S\cap B|=|S\cap\overline{B}|=1$ and $|S\cap T|=3$.

Let $A\cap B=\{y\}$, $A\cap\overline{B}=\{z\}$, $S\cap B=\{v_{2}\}$ and $S\cap\overline{B}=\{v_{3}\}$.
If $v_{2}x\notin E(G)$, then $\{x, z\}$ is a nontrivial fragment, which contradicts the choice of $A$. So $v_{2}x\in E(G)$. Similarly, $v_{3}x\in E(G)$. Let $C$ be a nontrivial fragment with respect to $v_{2}x$, and let $R=N_{G}(C)$, $\overline{C}=V(G)-(C\cup R)$. Then $x\in A\cap R$ and $v_{2}\in S\cap R$.
Similar to the discussion of fragment $B$, we have $|A\cap R|\neq3$. We assume $A\cap C\neq\emptyset$ without loss of generality. Then similar to the proof of Claim 1, we can obtain that $|A\cap C|=1$ and $\overline{A}\cap\overline{C}\neq\emptyset$. If $|A\cap R|=2$, then we see that $yz\in E(G)$, a contradiction. So $|A\cap R|=1$. Hence, $|A\cap\overline{C}|=1$, and thus, $\overline{A}\cap C\neq\emptyset$. This implies that $zv_{2}\in E(G)$, a contradiction.
\end{proof}

By Claim 5, we have that either $|A\cap B|=1$ and $A\cap\overline{B}=\emptyset$ or $A\cap B=\emptyset$ and $|A\cap\overline{B}|=1$. Without loss of generality, we assume that the former holds. By Claim 1, $\overline{A}\cap\overline{B}=\emptyset$. Then $|S\cap B|=|S\cap\overline{B}|=2$ and $|S\cap T|=1$. Let $A\cap B=\{y\}$. Clearly, $N_{G}(y)=(S\cap B)\cup(S\cap T)\cup(A\cap T)$ and $v_{1}y\in E(G)$. Let $C$ be a nontrivial fragment with respect to $v_{1}y$, and let $R=N_{G}(C)$, $\overline{C}=V(G)-(C\cup R)$. From the arbitrariness of vertex $x$ in $A$, we have $|A\cap R|=2$. Then we see that $|N_{G}(y)\cap R|\geq2$.
Since $d_{G}(y)=5$, $|N_{G}(y)\cap C|=1$ or $|N_{G}(y)\cap\overline{C}|=1$. By Lemma \ref{nontrivial}, $|C|=2$ or $|\overline{C}|=2$, which contradicts the choice of $A$.
This completes the proof of Theorem \ref{thm1}.
\end{proof}

\section{A proof of Theorem \ref{thm2}}
In this section, we give a proof Theorem \ref{thm2}.

\begin{figure}
  \centering
  \includegraphics{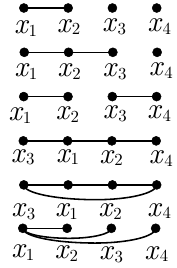}\\
  \caption{$K_{2}\cup 2K_{1}$, $P_{3}\cup K_{1}$, $2K_{2}$, $P_{4}$, $C_{4}$, $K_{1,3}$}
  \label{fig2}
\end{figure}

\begin{proof}[{\bf Proof of Theorem \ref{thm2}}]
Suppose that $G$ is a contraction critical quasi 5-connected graph. By Theorem \ref{thm1}, $\kappa(G)=4$.
If $|V(G)|\leq8$, then $G$ is also a contraction critical 4-connected graph. By Lemma \ref{description}, we have that $G$ does not satisfy the degree sum condition.
Therefore, $|V(G)|\geq9$.
Note that $\delta(G)=4$. Let $x\in V_{4}(G)$ with $N_{G}(x)=\{x_{1}, x_{2}, x_{3}, x_{4}\}$. By Lemmas \ref{triangle} and \ref{neighbor}, we have that $G[N_{G}(x)]$ is isomorphic to one of the six types: $K_{2}\cup 2K_{1}$, $P_{3}\cup K_{1}$, $2K_{2}$, $P_{4}$, $C_{4}$, $K_{1,3}$. For each type, the distribution of the four vertices is shown in Figure \ref{fig2}. Note that $x_{1}x_{2}\in E(G)$ holds in all types. Moreover, we have that both $G/xx_{3}$ and $G/xx_{4}$ are 4-connected by the degree sum condition and Lemma \ref{degree}.
Let $A$ and $B$ be quasi fragments with respect to $xx_{3}$ and $xx_{4}$, respectively. Let $S=N_{G}(A)$, $\overline{A}=V(G)-(A\cup S)$, $T=N_{G}(B)$ and $\overline{B}=V(G)-(B\cup T)$.  Without loss of generality, we assume that $x_{3}\in S\cap\overline{B}$ and $x_{4}\in\overline{A}\cap T$. Then we see that $\{x_{1}, x_{2}\}\subseteq V(G)-\overline{A}-\overline{B}$.
Let $X_{1}=(S\cap B)\cup(S\cap T)\cup(A\cap T)$, $X_{2}=(S\cap B)\cup(S\cap T)\cup(\overline{A}\cap T)$, $X_{3}=(\overline{A}\cap T)\cup(S\cap T)\cup(S\cap\overline{B})$ and $X_{4}=(A\cap T)\cup(S\cap T)\cup(S\cap\overline{B})$.

\noindent{\bf Claim 1.} If $A\cap B\neq\emptyset$, then $|X_{1}|\geq5$;
If $\overline{A}\cap B\neq\emptyset$, then $|X_{2}|\geq6$;
If $\overline{A}\cap\overline{B}\neq\emptyset$, then $|X_{3}|\geq6$;
If $A\cap\overline{B}\neq\emptyset$, then $|X_{4}|\geq6$.

\begin{proof}
If $A\cap B\neq\emptyset$ and $|X_{1}|=4$, then $|A\cap B|=1$. However, the vertex in $A\cap B$ has degree four, and has distance one from $x$, a contradiction. So $|X_{1}|\geq5$.
If $A\cap\overline{B}\neq\emptyset$, then $|X_{4}|\geq5$ since $N_{G}(x)\cap(A\cap\overline{B})=\emptyset$. If $|X_{4}|=5$, then $|A\cap\overline{B}|=1$. However, the vertex in $A\cap\overline{B}$ has degree four, and has distance two from $x$, a contradiction. So $|X_{4}|\geq6$.
The other two conclusions can be similarly obtained.
\end{proof}

\noindent{\bf Claim 2.} $|A\cap T|=|\overline{A}\cap T|=|S\cap B|=|S\cap\overline{B}|=2$, $|S\cap T|=1$ and $A\cap\overline{B}=\overline{A}\cap B=\overline{A}\cap\overline{B}=\emptyset$.

\begin{proof}
If $\overline{A}\cap T=\{x_{4}\}$, then $|X_{2}|\leq5$, and then, $\overline{A}\cap B=\emptyset$ by Claim 1. Since $|\overline{A}|\geq2$, $\overline{A}\cap\overline{B}\neq\emptyset$. By Claim 1, we have $|X_{3}|\geq6$. Then we see that $S\cap B=\emptyset$, and thus $N_{G}(x_{4})\cap B=\emptyset$. This implies that $T-\{x_{4}\}$ is a nontrivial 4-cut of $G$, a contradiction. So $|\overline{A}\cap T|\geq2$. Similarly, we have $|S\cap\overline{B}|\geq2$.
If $|A\cap T|\leq1$, then $|X_{1}|\leq4$, which implies that $A\cap B=\emptyset$ by Claim 1. Since $|A|\geq2$, $A\cap\overline{B}\neq\emptyset$. By Claim 1, $|X_{4}|\geq6$. It follows that $|A\cap T|=1$ and $S\cap B=\emptyset$, so $N_{G}(x)\cap B=\emptyset$, a contradiction. So $|A\cap T|\geq2$. Similarly, we have $|S\cap B|\geq2$. Therefore, $|A\cap T|=|\overline{A}\cap T|=|S\cap B|=|S\cap\overline{B}|=2$ and $|S\cap T|=1$.
Now, we see that $|X_{1}|=|X_{2}|=|X_{3}|=|X_{4}|=5$. By Claim 1, $A\cap\overline{B}=\overline{A}\cap B=\overline{A}\cap\overline{B}=\emptyset$.
\end{proof}

Let $S\cap\overline{B}=\{a, x_{3}\}$ and $\overline{A}\cap T=\{b, x_{4}\}$. Then $N_{G}(a)=\{x_{3}\}\cup(T-\{x\})$ and $N_{G}(b)=\{x_{4}\}\cup(S-\{x\})$.

\noindent{\bf Claim 3.} $G[N_{G}(x)]\cong 2K_{2}$.

\begin{proof}
By Claim 2, $N_{G}(x)\ncong K_{1,3}$; Otherwise, we find that $x_{1}\in S\cap T$, and thus, $|S\cap T|\geq2$, a contradiction.
If $N_{G}(x)\cong C_{4}$ or $N_{G}(x)\cong P_{4}$, then $x_{1}\in A\cap T$ and $x_{2}\in S\cap B$. Thus, $A\cap B=\emptyset$, for otherwise, $|A\cap B|=1$ and the vertex has degree four and has distance two from $x$, a contradiction. From this, it is not difficult to determine that edge $x_{2}u$ is quasi 5-contractible, where $\{u\}=(A\cap T)-\{x_{1}\}$. This contradicts the assumption that $G$ is a contraction critical quasi 5-connected graph.
If $N_{G}(x)\cong K_{2}\cup 2K_{1}$ or $N_{G}(x)\cong P_{3}\cup K_{1}$, then $x_{3}x_{4}\notin E(G)$. Thus, $N_{G}(x_{3})=\{a\}\cup(T-\{x_{4}\})$ and $N_{G}(x_{4})=\{b\}\cup(S-\{x_{3}\})$. It follows that $G$ has a subgraph isomorphic to Figure \ref{fig1}. By Lemma \ref{subgraph}, $bx_{3}$ is a quasi 5-contractible edge, a contradiction.
\end{proof}

By Claim 3 and the arbitrariness of $x\in V_{4}(G)$, we have that every vertex of degree four in $G$ such that its neighbor set is isomorphic to $2K_{2}$. Let $A\cap T=\{a_{1}, a_{2}\}$. Since $d_{G}(x_{3})\geq5$, we may assume $a_{1}x_{3}\in E(G)$ without loss of generality. Since two vertices $a_{1}$ and $x$ has distance two, $d_{G}(a_{1})\geq5$. Thus $\delta(G/aa_{2})=4$, which implies that $G/aa_{2}$ is 4-connected by Lemma \ref{degree}. Let $D$ be a quasi fragment with respect to $aa_{2}$, and let $Q=N_{G}(D)$, $\overline{D}=V(G)-(D\cup Q)$. Clearly, $x_{3}\in Q$.
Without loss of generality, assume that $\{b, x_{4}\}\subseteq D\cup Q$ and $a_{1}\in\overline{D}$.
If $x\in\overline{D}$, then $x_{4}\in Q$ and $b\in D$, and then $N_{G}(\{a, x_{3}\})\cap D=\{b\}$. It follows $|D|=2$. Let $D=\{b, v\}$. Then we have $d_{G}(v)=4$, and the distance between $v$ and $x$ is two, a contradiction. So $x\in D\cup Q$. Then we see that $N_{G}(\{a, x_{3}\})\cap\overline{D}=\{a_{1}\}$, which implies $|\overline{D}|=2$. Let $\overline{D}=\{a_{1}, u\}$. Then $d_{G}(u)=4$, and thus, $N_{G}(u)\cong 2K_{2}$. It follows $d_{G}(a_{1})=4$. However, the distance between $a_{1}$ and $x$ is two, a contradiction.
\end{proof}

\end{document}